\documentclass{article}
\usepackage[all,ps,arc]{xy}  
\usepackage{amsmath,amssymb,latexsym}
\usepackage{amsfonts}
\usepackage{amsthm}
\usepackage{xcolor}

\newcommand{\point}{\ensuremath{\xymatrix{A\ar@<+.6ex>[r]^(.5){\alpha}
&B\ar@<+.6ex>[l]^(.5){\beta}}}}
\newcommand{\rg}{\ensuremath{\xymatrix{A\ar@<+1ex>[r]^{\alpha}\ar@<-1ex>[r]_{\gamma}&B\ar[l]|{\beta}}}}

\newcommand{\SH}{{\rm (SH)}}

\newtheorem{Theorem}{Theorem}[section]
\newtheorem{Lemma}[Theorem]{Lemma}
\newtheorem{Proposition}[Theorem]{Proposition}
\newtheorem{Definition}[Theorem]{Definition}

\theoremstyle{remark}\newtheorem{Remark}[Theorem]{Remark}
\newtheorem{Example}[Theorem]{Example}

\newtheorem{Problem}[Theorem]{Problem}

\setcounter{secnumdepth}{2}

\newcommand{\bG}{\ensuremath{\mathbb G}}

\newcommand{\bH}{\ensuremath{\mathbb H}}
\newcommand{\bK}{\ensuremath{\mathbb K}}

\newcommand{\cA}{\ensuremath{\mathsf A}}
\newcommand{\cB}{\ensuremath{\mathsf B}}
\newcommand{\cC}{\ensuremath{\mathsf C}}

\newcommand{\cE}{\ensuremath{\mathsf E}}

\newcommand{\cQ}{\ensuremath{\mathsf Q}}
\newcommand{\cS}{\ensuremath{\mathsf S}}

\newcommand{\cM}{\ensuremath{\mathsf M}}

\newcommand{\cX}{\ensuremath{\mathsf X}}

\newcommand{\cY}{\ensuremath{\mathsf Y}}

\newcommand\Aut{\ensuremath{\text{Aut}}}

\newcommand\Out{\ensuremath{\text{Out}}}

\newcommand\coker{\ensuremath{\mathrm{coker\,}}}

\newcommand\Gpd{\ensuremath{\mathsf{Gpd}}}
\newcommand\Frac{\ensuremath{\mathsf{Frac}}}
\newcommand\Set{\ensuremath{\mathsf{Set}}}
\newcommand\Gp{\ensuremath{\mathsf{Gp}}}

\newcommand\XExt{\ensuremath{\mathsf{XExt}}}
\newcommand\BExt{\ensuremath{\mathsf{BExt}}}

\newcommand\OPEXT{\ensuremath{\mathsf{OPEXT}}}

\newcommand\Bfly{\ensuremath{\mathsf{Bfly}}}
\newcommand\XMod{\ensuremath{\mathsf{XMod}}}
\newcommand\Mod{\ensuremath{\mathsf{Mod}}}
\newcommand\Mon{\ensuremath{\mathsf{Mon}}}
\newcommand\Cat{\ensuremath{\mathsf{Cat}}}
\newcommand\Ab{\ensuremath{\mathsf{Ab}}}
\newcommand\AutM{\ensuremath{\mathsf{AutM}}}
\newcommand\AM{\ensuremath{\mathsf{AM}}}
\newcommand\AssAlg{\ensuremath{\mathsf{AssAlg}}}
\newcommand\Bimod{\ensuremath{\mathsf{Bimod}}}
\newcommand\XBiext{\ensuremath{\mathsf{XBiext}}}
\newcommand\BXBiext{\ensuremath{\mathsf{BXBiext}}}
\newcommand\Vect{\ensuremath{\mathsf{Vect}}}
\newcommand{\Fib}{\ensuremath{\mathsf{Fib}}}
\newcommand{\OpFib}{\ensuremath{\mathsf{OpFib}}}

\newcommand{\uffa}{\ensuremath{\underline\varphi}}

\newdir{|>}{{}*!/8pt/@{|}*!/3.5pt/:(1,-.2)@^{>}*!/3.5pt/:(1,+.2)@_{>}}
\newdir{ >}{{}*!/-10pt/@{>}}
\newdir{ |>}{!/10pt/{}*!/4.5pt/@{|}*:(1,-.2)@^{>}*:(1,+.2)@_{>}}

\begin{document}

\def \tm{\!\times\!}

\newenvironment{changemargin}[2]{\begin{list}{}{
\setlength{\topsep}{0pt}
\setlength{\leftmargin}{0pt}
\setlength{\rightmargin}{0pt}
\setlength{\listparindent}{\parindent}
\setlength{\itemindent}{\parindent}
\setlength{\parsep}{0pt plus 1pt}
\addtolength{\leftmargin}{#1}\addtolength{\rightmargin}{#2}
}\item}{\end{list}}

\title{Fibred categorical theory of\\ obstruction and classification of morphisms}
\author{A.\ S.\ Cigoli, S.\ Mantovani, G.\ Metere and E.\ M.\ Vitale}
\maketitle

\begin{abstract}
We set up a fibred categorical theory of obstruction and classification of morphisms that specializes to the one of monoidal functors between categorical groups and also to the Schreier-Mac Lane theory of group extensions. Further applications are provided, as for example a classification of unital associative algebra extensions with non-abelian kernel in terms of Hochschild cohomology. 

\end{abstract}

\section{Introduction}

Any extension
\[
\xymatrix{
    0 \ar[r] & K \ar[r]^k & E \ar[r]^f & C \ar[r] & 0
}
\]
of groups determines an action of $E$ on $K$, and in turn a
homomorphism $\psi_0 \colon C \to \frac{\mathrm{Aut}(K)}{\mathrm{Inn}(K)}=\mathrm{Out}(K)$, called the \emph{abstract kernel} of the extension. It is a classical problem to establish whether, given a morphism $\psi_0$ as above, there exists an extension having $\psi_0$ as its abstract kernel. The answer to this question is provided by the Schreier--Mac Lane Theorem (see \cite{Homology}), where it is proved that each abstract kernel determines a corresponding action $\xi$ of $C$ on the centre $\mathrm{Z}(K)$ of $K$, and an element of $H^3(C,\mathrm{Z}(K),\xi)$ called \emph{obstruction}. The requested extension exists if and only if the obstruction vanishes. Moreover, if the obstruction vanishes, then the set of extensions inducing the given abstract kernel is a simply transitive $H^2(C,\mathrm{Z}(K),\xi)$-set.

Two remarkable generalizations of the Schreier--Mac Lane obstruction theory are known. The first one, due to Bourn (see \cite{Bourn08}), basically shows that Schreier--Mac Lane Theorem still holds in a wider class of categories, such as a semi-abelian category with suitable properties. The second one is based on the homotopy classification of categorical groups established by Sinh in \cite{Sinh} and is stated in a more explicit way by Cegarra, Garc\'ia-Calcines and Ortega in \cite{CGCO}. It consists in replacing extensions of groups by monoidal functors between categorical groups. Indeed, extensions $(f,k)$ as above bijectively correspond to monoidal functors from the categorical group associated with the crossed module $0 \to C$ to the one associated with $K \to \mathrm{Aut}(K)$.

Since these two generalizations go into quite different directions, in this paper we adress the problem of finding a general setting which subsumes at the same time the semi-abelian setting and the categorical group setting.

In order to understand the solution we propose, let us look at the point of view on the \emph{obstruction problem} for (crossed) extensions of groups adopted in \cite{CM16}. The category $\XExt(\Gp)$ of crossed extensions of groups is equipped with a functor $\Pi\colon\XExt(\Gp) \to \Mod$, which sends each crossed extension
\[
\xymatrix{
0 \ar[r] & B \ar[r] & G_2 \ar[r]^{\partial} & G_1 \ar[r] & C \ar[r] & 0
}
\]
to the group module $(C,B)$, the action of $C$ on $B$ being induced by the crossed module $\partial$. Weak equivalences in $\XExt(\Gp)$ are those morphisms of crossed extensions which are turned into isomorphisms by the functor $\Pi$. Therefore, $\Pi$ factorizes through the corresponding category of fractions, whose morphisms are isomorphism classes of the so-called \emph{butterflies} (see Sections \ref{sec:XExt}--\ref{sec:bfly_gp})
\[
\xymatrix{
\XExt(\Gp)\ar[r]^-Q\ar[dr]_{\Pi}
&[\BExt](\Gp)\ar[d]^{P}
\\
&\Mod}
\]
An extension
\[
\xymatrix{
    0 \ar[r] & K \ar[r]^k & E \ar[r]^f & C \ar[r] & 0
}
\]
inducing the abstract kernel $\psi_0 \colon C \to \mathrm{Out}(K)$ gives rise to the butterfly depicted by the following commutative diagram, where the left and right columns are crossed extensions.
$$
\xymatrix@!C=6ex{
	0 \ar[d] \ar[rr] & &  \text{Z}(K) \ar[d] \\
	0 \ar[rd] \ar[dd] & & K \ar[dd]^{\mathcal I_K} \ar[ld]_{k} \\
	& E \ar[ld]_{f} \ar[rd]|-{\textsf{conj}} \\
	C \ar[d]_1 & & \Aut(K) \ar[d] \\
	C \ar[rr]_-{\psi_0} & & \Out(K)
}
$$
This interpretation allows us to translate the existence of the extension $(f,k)$ into the existence of such a morphism in the category $[\BExt](\Gp)$.

Having in mind this particular case, one can generalize the classical obstruction problem as follows: given any two crossed extensions $X$ and $X'$ and a morphism $(\psi_0,\psi)$ between their associated modules via $\Pi$, is there a butterfly between $X$ and $X'$ whose image via $P$ is $(\psi_0,\psi)$?

Actually, the above factorization of $\Pi$ through the category of fractions $[\BExt](\Gp)$ lives in the 2-category $\underline\Fib(\Gp)$ of fibrations over \Gp. It is proved in \cite{CMMV_Yoneda} that $\Pi$ is a fibrewise opfibration in $\underline\Fib(\Gp)$ (see Definition \ref{def:FOF}), and this implies that in the above factorization, $P$ is a fibrewise opfibration as well, but with groupoidal fibres (see Proposition 4.8 in \cite{refl_fib}). 
Moreover, thanks to the existence of liftings and coliftings in the fibrewise opfibration $P$, the obstruction problem formulated above may be reduced to the case where $(\psi_0,\psi)$ is an identity. 

This points the way to a formal context where to develop an abstract obstruction theory. In fact, we can formulate an \emph{obstruction problem} as in \ref{def:obstruction_problem}:
\begin{itemize}
 \item Let 
 $$ \xymatrix{
    \cX \ar[rr]^{P}\ar[dr]_{F}
    &&\cM\ar[dl]^{G}
    \\
    &\cB
    }$$
 be a fibrewise opfibration in $\Fib(\cB)$. Given two objects $x$ and $y$ of $\cX$, and a morphism $\varphi\colon P(x)\to P(y)$ of $\cM$, is there any $f\colon x\to y$ in $\cX$ such that $P(f)=\varphi$? When this is the case, is it possible to describe the set of such morphisms?
\end{itemize}

The main result of this work is Theorem \ref{thm:structure}, where we show that also in such a formal context, if $P$ has groupoidal fibres, the set of solutions of a given obstruction problem, if not empty, is still a simply transitive $\Gamma$-set, where $\Gamma$ is the automorphism group of an object in the fibre, and the action is given by arrow composition.

Once we get such a formulation in an abstract context, we can apply directly the above result to concrete situations, that can be described by means of a fibrewise opfibration with groupoidal fibres, such as:
\begin{itemize}
 \item extensions of groups with abelian kernel (Theorem \ref{thm:classification_gp});
 \item abelian extensions in a semi-abelian context (Theorem \ref{thm:opext_semiab});
 \item singular extensions of unital associative algebras (Theorem \ref{thm:classification_aa}).
\end{itemize}

Moreover, there are interesting situations described by a fibrewise opfibration whose fibres are not necessarily groupoids, so that Theorem \ref{thm:structure} cannot be applied directly. However, as explained in \cite{refl_fib}, under suitable conditions one can factorize a fibrewise opfibration through an appropriate category of fractions and the resulting factorization is a fibrewise opfibration with groupoidal fibres (see Proposition \ref{prop:factorization}). This way, we translate the original obstruction problem into a different problem, where we look for \emph{weak maps} (i.e.\ morphisms in the category of fractions), instead of maps, between the same objects. In this framework, we can apply our general result to different cases, namely:
\begin{itemize}
\item the already mentioned butterflies between crossed extensions of groups (Theorem \ref{thm:XExt});
\item monoidal functors between categorical groups: we recover a cohomological classification of such functors in terms of homotopy invariants of categorical groups (Theorem \ref{thm:monoidal_functors_CG}); 
\item crossed bimodule butterflies, introduced by Aldrovandi in \cite{Aldrovandi}: we get the classification Theorem \ref{thm:structure_AssAlg}, where we take advantage of the description of the third Hochschild cohomology groups in terms of crossed biextensions provided in \cite{BauMi02};
\item as a particular case of crossed bimodule butterflies, we obtain a variation of the Schreier-Mac Lane Theorem, providing a classification of unital associative algebra extensions with non-abelian kernel (Theorem \ref{thm:SML_AA}).
\end{itemize}

\section{Preliminaries}

For the sake of completeness, in this section we recall the following well-known facts and definitions about fibrations of categories.

\begin{Definition}\label{def:cartesian}
Let $P\colon \cX\to \cB$ be a functor.
A morphism $f\colon x\to y$ is called  \emph{cartesian}  w.r.t.\ $P$, or $P$-cartesian, if
\begin{itemize}
\item for all $\alpha\colon a'\to a$ and $f'\colon x'\to y$ with $P(f')=P(f)\cdot \alpha$,
there is a unique lifting $\hat\alpha\colon x'\to x$ with $P(\hat\alpha)=\alpha$ and
    $f'=f\cdot \hat\alpha$.
\end{itemize}

$$
\begin{aligned}
\xymatrix{
x'\ar@{-->}[dr]_{\hat\alpha}\ar[drr]^{f'}
\\
&x\ar[r]_{f}
&y
\\
a'\ar[dr]_{\alpha}\ar[drr]^{P(f')}
\\
&a\ar[r]_{P(f)}
&b
}
\end{aligned}
\qquad
\qquad
\begin{aligned}
\xymatrix{\cX\\\ar@{.>}[d]^{P}\\\\\cB}
\end{aligned}
$$
A morphism $f\colon x\to y$ is called $P$-vertical (or just vertical) if $P(f)$ is an identity.

\smallskip
For $b\in \cB$, we denote by $\cX_b$ the fibre of $P$ over $b$, i.e.\ the subcategory of $\cX$ determined by all morphisms $g$ such that
$P(g)= 1_b$.
\end{Definition}

We can extend the fibre notation: for $\alpha\colon P(x')\to P(x)$ we
write $\cX_{\alpha}(x',x)$ for the set of those morphisms
$h\colon x'\to x$ with $P(h)=\alpha$.

\begin{Lemma}\label{lem:cartesian_bijection}
With notations as above, the morphism $f\colon x\to y$ is cartesian
if and only if for all $x'$ in $\cX$ and $\alpha\colon P(x') \to P(x)$, the map given by the composition with $f$ yields a bijection:
$$
f\cdot - \ \colon \cX_{\alpha}(x',x)\to \cX_{P(f)\cdot\alpha}(x',y)\,.
$$
\end{Lemma}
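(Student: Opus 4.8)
The plan is to prove the two directions of the biconditional by a direct diagram chase, essentially unpacking the definition of cartesian morphism in Definition \ref{def:cartesian}.

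First I would show that if $f\colon x\to y$ is cartesian, then for every $x'$ and every $\alpha\colon P(x')\to P(x)$ the map $f\cdot-\colon\cX_\alpha(x',x)\to\cX_{P(f)\cdot\alpha}(x',y)$ is a bijection. Well-definedness is immediate: if $P(h)=\alpha$ then $P(f\cdot h)=P(f)\cdot\alpha$. For surjectivity, take $f'\in\cX_{P(f)\cdot\alpha}(x',y)$, so $P(f')=P(f)\cdot\alpha$; the cartesian property, applied with this $\alpha$ and this $f'$, produces $\hat\alpha\colon x'\to x$ with $P(\hat\alpha)=\alpha$ and $f'=f\cdot\hat\alpha$, i.e.\ $\hat\alpha\in\cX_\alpha(x',x)$ is a preimage. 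For injectivity, if $h_1,h_2\in\cX_\alpha(x',x)$ satisfy $f\cdot h_1=f\cdot h_2=:f'$, then both are liftings $\hat\alpha$ of $\alpha$ with $f'=f\cdot\hat\alpha$, so the uniqueness clause of the cartesian property forces $h_1=h_2$.

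Conversely, assume that $f\cdot-$ is a bijection $\cX_\alpha(x',x)\to\cX_{P(f)\cdot\alpha}(x',y)$ for all $x'$ and all $\alpha\colon P(x')\to P(x)$. To verify that $f$ is cartesian, take any $\alpha\colon a'\to a$ with $a=P(x)$ (so $a'=P(x')$ for $x'$ the source of the given $f'$) and any $f'\colon x'\to y$ with $P(f')=P(f)\cdot\alpha$. Then $f'\in\cX_{P(f)\cdot\alpha}(x',y)$, so by surjectivity there is $\hat\alpha\in\cX_\alpha(x',x)$ with $f\cdot\hat\alpha=f'$, and by construction $P(\hat\alpha)=\alpha$; injectivity of $f\cdot-$ gives uniqueness of such a lifting. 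This is exactly the defining condition, so $f$ is cartesian.

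I do not expect any genuine obstacle here; the statement is a routine reformulation and the only care needed is bookkeeping of sources and targets (matching $a'=P(x')$, $a=P(x)$, $b=P(y)$) so that the quantifiers in the two formulations line up. The one point worth stating explicitly is that the bijectivity hypothesis is asked for \emph{all} $x'$ simultaneously, which is precisely what lets us feed an arbitrary $f'\colon x'\to y$ into the argument; this is why the equivalence holds on the nose rather than only up to some fibrewise restriction.
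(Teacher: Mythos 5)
Your proof is correct: both directions are exactly the routine unpacking of Definition \ref{def:cartesian}, with surjectivity of $f\cdot-$ corresponding to the existence clause and injectivity to the uniqueness clause of the cartesian lifting property. The paper states this lemma as a recalled standard fact without proof, and your argument is the standard one, with the source/target bookkeeping ($a'=P(x')$, $a=P(x)$) handled correctly.
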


\begin{Definition} \label{def:fibration}
The functor $P\colon \cX\to \cB$ is a fibration, or fibration over $\cB$, if for every $\varphi\colon a \to b$ in \cB, and
$y$ object of $\cX_b$ there is a cartesian lifting $f\colon x\to y$ of $\varphi$ at $y$, i.e.\ $f$ is $P$-cartesian and $P(f)=\varphi$.
\end{Definition}

Cartesian liftings  are universal: if a morphism $\varphi$ admits two cartesian liftings $f$ and $f'$ at the same $y$, then the unique comparison $h$ with $f'=f\cdot h$ is an isomorphism.

\begin{Definition} \label{def:Fib(B)}
Let $P\colon \cX\to\cB$ and $Q\colon \cY\to \cB$ be fibrations over $\cB$. A
functor $F\colon \cX\to \cY$ is said to be \emph{fibred (or cartesian) over $\cB$}  if
\begin{enumerate}
\item $Q\cdot F=P$,
\item if $f$ is $P$-cartesian, then $F(f)$ is $Q$-cartesian.
\end{enumerate}

\noindent Given two fibred functors  $F$ and $G$  from $P$ to $Q$, a natural transformation
$$\tau \colon F\Rightarrow G$$ is said to be \emph{vertical over $\cB$} if, for every object $x$ in $\cX$, its components $\tau_x\colon F(x)\to G(x)$ are $Q$-vertical.

\smallskip
\noindent These data define the 2-category $\Fib(\cB)$.

\smallskip
We say that $P\colon \cX\to \cB$ is an opfibration if $P^\mathsf{op}\colon \cX^\mathsf{op}\to \cB^\mathsf{op}$
is a fibration. 
The related notions of opcartesian morphisms and opcartesian liftings are understood. Opfibrations, opfibred functors and vertical natural transformations define the 2-category $\OpFib(\cB)$.
\end{Definition}

\section{The obstruction problem and the classification theorem}\label{sec:setting}
%
%
In the present section, we start by recalling the definition of \emph{fibrewise opfibration}, the formal setting where our obstruction problems take place. Then we state our main result as a classification theorem that, under suitable conditions, describes the solutions of a given obstruction problem.


\subsection{Fibrewise opfibrations and the obstruction problem} \label{sec:problem}

\begin{Definition}[\cite{CMMV_Yoneda}] \label{def:FOF}
A morphism  $P\colon(\cX,F)\to(\cM,G)$ in $\Fib(\cB)$
\begin{equation} \label{diag:FOF}
  \begin{aligned}
    \xymatrix{
    \cX \ar[rr]^{P}\ar[dr]_{F}
    &&\cM\ar[dl]^{G}
    \\
    &\cB
    }
  \end{aligned}
\end{equation}
is a \emph{fibrewise opfibration} if for every object $b$ in $\cB$, the restriction to the fibres
$$
\xymatrix{P_{b}\colon \cX_b\ar [r] &\cM_b}
$$
is an opfibration.
\end{Definition}


\begin{Example} \hfill
\begin{enumerate} 
\item Any internal opfibration in $\Fib(\cB$) is an example of fibrewise opfibration (see \cite{CMMV_Yoneda} for details). 
\item Let $S\colon\cS\to \cA\times\cB$ be a Yoneda regular span. Then the diagram
$$
\xymatrix{
    \cS \ar[rr]^{S}\ar[dr]_{S_2=P_2\cdot S}
    &&\cA\times \cB\ar[dl]^{P_2}
    \\
    &\cB
    }
$$
is a fibrewise opfibration. 
\item A special case is when $S$ is a $2$-sided fibration, or a discrete $2$-sided fibration, as for instance when it is determined by a profunctor.
\end{enumerate}
\end{Example}

The main object of our study may be formalized in the following way.

\begin{Problem} \label{def:obstruction_problem}
Let us suppose we are given a fibrewise opfibration $P\colon(\cX,F)\to(\cM,G)$,  two objects $x$ and $y$ of $\cX$, and a morphism $\varphi\colon P(x)\to P(y)$ of $\cM$.
The \emph{obstruction problem} associated with the triple $(x,y,\varphi)$ is to investigate whether there exists any $f\colon x\to y$ such that $P(f)=\varphi$, and in this case, to describe the set of such morphisms.
\end{Problem}

\subsection{The classification theorem}
Given a fibrewise opfibration (Definition \ref{def:FOF})
\begin{equation}
  \begin{aligned}
    \xymatrix{
    \cX \ar[rr]^{P}\ar[dr]_{F}
    &&\cM\ar[dl]^{G}
    \\
    &\cB
    }
  \end{aligned}
\end{equation}
let us consider two objects $x$ and $y$ of $\cX$,
and a morphism $\varphi\colon P(x)\to P(y)$.

Since $F$ is a fibration, there exists a cartesian lifting
$$
\xymatrix{w\colon\varphi^*y\ar[r]&y}
$$
of $G(\varphi)$ at $y$.
Then, since $P$ is fibred over $\cB$, $\varphi_k=P(w)$ is cartesian w.r.t.\ $G$,
thus giving a factorization
$$
\varphi = \varphi_k\cdot\varphi_v
$$
with $\varphi_v$ lying in the fibre $\cM_{F(x)}$.

Let $P_{F(x)}\colon \cX_{F(x)}\to \cM_{F(x)}$ be the restriction of $P$ to the
fibres over $F(x)$. By hypothesis, $P_{F(x)}$ is an opfibration, so that we can exhibit an opcartesian lifting $u$ of $\varphi_v$ at $x$:
$$
\xymatrix{
x\ar[r]^-u
&\varphi_*x
\\
P(x)\ar[r]_-{\varphi_v}
&P(\varphi^* y)
}
\qquad\qquad
\xymatrix{
\cX_{F(x)}\ar[d]^{P_{F(x)}}
\\
\cM_{F(x)}
}
$$
By Lemma \ref{lem:cartesian_bijection}, and by its dual version, we obtain:
\begin{enumerate}
\item a bijection given by the composition with $w$:
$$
\xymatrix{w\cdot - \colon \cX_{F(x)}(x,\varphi^*y)\ar[r]
&\cX_{G(\varphi)}(x,y)}
$$
\item a bijection given by precomposition with $u$:
$$
\xymatrix{-\cdot u \colon \left(\cX_{F(x)}\right)_{P(\varphi^*y)}(\varphi_*x,\varphi^*y)\ar[r]
&\left(\cX_{F(x)}\right)_{\varphi_v}(x,\varphi^*y)}
$$
The reader will be easily convinced that no ambiguity will arise if we write the last bijection as follows:
$$
\xymatrix{-\cdot u \colon \cX_{P(\varphi^*y)}(\varphi_*x,\varphi^*y)\ar[r]
&\cX_{\varphi_v}(x,\varphi^*y)}
$$
\end{enumerate}
Indeed
$$
\cX_{\varphi_v}(x,\varphi^*y)=
\left(\cX_{F(x)}\right)_{\varphi_v}(x,\varphi^*y)\subseteq
\cX_{F(x)}(x,\varphi^*y)\,,
$$
so that we can restrict the first bijection to
\begin{itemize}
\item[3.] a bijection
$$
\xymatrix{w\cdot - \colon \cX_{\varphi_v}(x,\varphi^*y)\ar[r]
&\cX_{P(w)\cdot\varphi_v}(x,y)}
$$
\end{itemize}

The previous discussion leads to the following statement.
\begin{Theorem} \label{thm:obstruction}
In the fibrewise opfibration $(P,F,G)$, we consider two objects $x$ and $y$ in $\cX$, and a map $\varphi\colon P(x)\to P(y)$. Then there is a bijection
$$
\xymatrix{
\Phi=\Phi_{x,y,\varphi}\colon\cX_{P(\varphi^*y)}(\varphi_*x,\varphi^*y)\ar[r]
&\cX_{\varphi}(x,y)
}
$$
\end{Theorem}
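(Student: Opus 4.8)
The plan is to obtain the bijection $\Phi_{x,y,\varphi}$ simply by composing the three bijections assembled in the discussion preceding the statement. Concretely, first I would recall bijection~3, the composition-with-$w$ map
$$
w\cdot-\colon \cX_{\varphi_v}(x,\varphi^*y)\to \cX_{P(w)\cdot\varphi_v}(x,y),
$$
which is well-defined because $\cX_{\varphi_v}(x,\varphi^*y)$ is contained in $\cX_{F(x)}(x,\varphi^*y)$, so it is the restriction of the cartesian bijection of Lemma~\ref{lem:cartesian_bijection} applied to the cartesian morphism $w\colon\varphi^*y\to y$. Here I would note that $P(w)\cdot\varphi_v=\varphi_k\cdot\varphi_v=\varphi$, so the codomain of this bijection is exactly $\cX_{\varphi}(x,y)$.

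Next I would invoke bijection~2, the precomposition-with-$u$ map
$$
-\cdot u\colon \cX_{P(\varphi^*y)}(\varphi_*x,\varphi^*y)\to \cX_{\varphi_v}(x,\varphi^*y),
$$
which is the dual form of Lemma~\ref{lem:cartesian_bijection} applied inside the fibre $\cX_{F(x)}$ to the opcartesian morphism $u\colon x\to\varphi_*x$ of the opfibration $P_{F(x)}$. One must be slightly careful that the opcartesianness of $u$ really yields a bijection onto morphisms with the prescribed value $\varphi_v$; this follows because the dual of Lemma~\ref{lem:cartesian_bijection} characterises opcartesian morphisms by exactly such a bijection, and then, as remarked in the excerpt, the set $\left(\cX_{F(x)}\right)_{\varphi_v}(x,\varphi^*y)$ coincides with $\cX_{\varphi_v}(x,\varphi^*y)$ since $\varphi_v$ lies in $\cM_{F(x)}$. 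Composing $-\cdot u$ followed by $w\cdot-$ then produces the desired bijection
$$
\Phi_{x,y,\varphi}=(w\cdot-)\circ(-\cdot u)\colon \cX_{P(\varphi^*y)}(\varphi_*x,\varphi^*y)\to \cX_{\varphi}(x,y),
$$
sending $g\mapsto w\cdot g\cdot u$.

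The genuinely delicate point, and the one I would spend most care on, is verifying that the three sets and maps are chained correctly, i.e.\ that the domain of $w\cdot-$ is precisely the codomain of $-\cdot u$, and that the factorization $\varphi=\varphi_k\cdot\varphi_v$ (obtained from the cartesian lifting $w$ of $G(\varphi)$ together with the fact that $P$ is fibred over $\cB$, so that $\varphi_k=P(w)$ is $G$-cartesian) gives $\varphi_v\in\cM_{F(x)}$, which is what legitimizes applying the fibrewise opfibration hypothesis. Everything else is formal: both constituent maps are bijections by Lemma~\ref{lem:cartesian_bijection} and its dual, so their composite is a bijection, and one can write down its inverse explicitly as $h\mapsto (w^{-1}\text{-part})$ via the two universal properties if an explicit formula is wanted. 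I do not expect any obstacle beyond this bookkeeping; the content of the theorem is entirely packaged into the setup, and the proof is the observation that the setup already produces $\Phi$.
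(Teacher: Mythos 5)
Your proof is correct and is essentially identical to the paper's: the paper likewise observes that $P(w)\cdot\varphi_v=\varphi_k\cdot\varphi_v=\varphi$ and defines $\Phi$ as the composite of bijections 2 and 3 from the preceding discussion. The extra bookkeeping you supply about the domains and codomains matching is exactly the content the paper delegates to that discussion, so there is nothing to add.
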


\begin{proof}
Since $P(w)\cdot\varphi_v=\varphi_k \cdot\varphi_v= \varphi$, we can compose the maps 2.\ and 3.\ above,
and get the bijection $\Phi$.
$$
\begin{aligned}
\xy
%
%
(24,0)*{};(0,-24)*{} **\dir{.};(60,-24)*{} **\dir{.};(84,0)*{} **\dir{.};(24,0)*{} **\dir{.};
(0,-24)*{};(0,-84)*{} **\dir{.};(60,-84)*{} **\dir{.};(60,-24)*{} **\dir{.};
(60,-84)*{};(84,-60)*{} **\dir{.};(84,0)*{} **\dir{.};
(24,0)*{};(24,-60)*{} **\dir{.};(84,-60)*{} **\dir{.};
(24,-60)*{};(0,-84)*{} **\dir{.};
(9,-66)*{\mathsf{X}};
{\ar (54,-57)*+{x};(54,-30)*+{\varphi_*x}_{u}};
{\ar@{-->} (54,-30)*+{\varphi_*x};(30,-30)*+{\varphi^*y}};
{\ar (30,-30)*+{\varphi^*y};(18,-42)*+{y}_{w}};
{\ar@{-->} (54,-57)*+{x};(18,-42)*+{y}};
{\ar@2{->} (66,-66)*{}; (84,-66)*{}^{P}  };
%
%
(114,0)*{};(90,-24)*{} **\dir{.};(90,-84)*{} **\dir{.};(114,-60)*{} **\dir{.};(114,0)*{} **\dir{.};
(99,-66)*{\mathsf{M}};
{\ar (108,-57)*+{\bullet}; (108,-30)*+{P(\varphi^*y)}_{\varphi_v}};
{\ar (108,-57)*+{\bullet}; (96,-42)*+{\bullet}^{\varphi}};
{\ar (108,-30)*+{P(\varphi^*y)}; (96,-42)*+{\bullet}_{\varphi_k}};
{\ar@2{->} (99,-81)*{}; (99,-100)*{}^{G}  };
%
%
(90,-118)*{};(114,-94)*{} **\dir{.};
(99,-124)*{\mathsf{B}};
{\ar (108,-100)*+{\bullet};(96,-112)*+{\bullet}^{G(\varphi)}};
{\ar@2{->} (52,-90)*{}; (80,-108)*{}_{F}  };
%
%
{\ar (24,-102)*{}; (31,-102)*{}_{\mathcal X}};
{\ar (24,-102)*{}; (24,-95)*{}_{\mathcal Y}};
{\ar (24,-102)*{}; (20,-106)*{}_{\mathcal Z}};
\endxy
\end{aligned}
$$
The reader may find it useful to follow the steps of the constructions involved in the theorem above on the diagram provided. Here, the fibres of $F$ are represented by $\mathcal{XY}$-square sections in $\cX$, the fibres of $G$ by $\mathcal Y$-line segments in $\cM$ and the fibres of $P$ by $\mathcal X$-line segments in $\cX$.
\end{proof}

\medskip
So far we have established a bijection $\Phi$ between the set of morphisms we were to describe and \emph{another} set of morphisms. This is not a big deal, unless we have some information on the second set of morphisms. It turns out that this can be the case, when we start with a fibrewise opfibration  $(P,F,G)$ such that the fibres of $P$ are groupoids. In this case, since the set $\cX_{P(\varphi^*y)}(\varphi_*x,\varphi^*y)$ lives in the fibre of $P$ over $P(\varphi^*y)$, we can use the structure of the fibres of $P$ to classify the maps in $\cX$. 

\begin{Theorem} \label{thm:structure}
Let us consider a fibrewise opfibration  $(P,F,G)$ such that the fibres of $P$ are groupoids.
Given $x$ and $y$ objects of $\cX$, and $\varphi\colon P(x)\to P(y)$, we  consider the set
$\cX_{\varphi}(x,y)$. Then
\begin{itemize}
\item[\emph{(i)}]  $\cX_{\varphi}(x,y)\neq \emptyset$ if and only if $\varphi^*y\cong \varphi_*x$ in the $P$-fibre.
\item[\emph{(ii)}] If this is the case, then  $\cX_{\varphi}(x,y)$ is a simply transitive $H$-set (i.e.\ an $H$-torsor), where the acting group is
$$ H=\cX_{P(\varphi^*y)}(\varphi^*y,\varphi^*y).$$
\end{itemize}
\end{Theorem}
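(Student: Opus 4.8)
The plan is to leverage Theorem~\ref{thm:obstruction}, which already reduces the study of $\cX_{\varphi}(x,y)$ to that of $\cX_{P(\varphi^*y)}(\varphi_*x,\varphi^*y)$ via the bijection $\Phi$. Since this second set lives entirely inside the fibre $\cX_{P(\varphi^*y)}$, which by hypothesis is a groupoid, we are reduced to an elementary statement about hom-sets in a groupoid. For part~(i): a hom-set between two objects of a groupoid is nonempty if and only if those objects are isomorphic; so $\cX_{P(\varphi^*y)}(\varphi_*x,\varphi^*y)\neq\emptyset$ iff $\varphi_*x\cong\varphi^*y$ in the $P$-fibre, and by the bijection $\Phi$ this is equivalent to $\cX_{\varphi}(x,y)\neq\emptyset$. (One should note the isomorphism is in the fibre over $P(\varphi^*y)$, which is a subgroupoid of the full $P$-fibre, but a $P$-vertical isomorphism between objects over the same base point is the same thing either way, so no subtlety arises here.)

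For part~(ii), assume $\cX_{\varphi}(x,y)\neq\emptyset$, equivalently $\varphi_*x\cong\varphi^*y$ in the fibre. First I would observe that $H=\cX_{P(\varphi^*y)}(\varphi^*y,\varphi^*y)$ is indeed a group: it is the automorphism group of the object $\varphi^*y$ in the groupoid $\cX_{P(\varphi^*y)}$, with composition as multiplication. Next, $H$ acts on $\cX_{P(\varphi^*y)}(\varphi_*x,\varphi^*y)$ by post-composition: for $h\in H$ and $g\colon\varphi_*x\to\varphi^*y$ in the fibre, set $h\cdot g := h\circ g$, which again lies in the fibre over $P(\varphi^*y)$ since both $h$ and $g$ do. Associativity and unitality of the action are immediate from those of composition. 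This action is simply transitive precisely because we are in a groupoid: given $g_1,g_2\colon\varphi_*x\to\varphi^*y$, the unique $h\in H$ with $h\cdot g_1=g_2$ is $h=g_2\circ g_1^{-1}$, and it is the only such element because $g_1$ is an isomorphism (hence right-cancellable). Transporting this torsor structure along the bijection $\Phi$ from Theorem~\ref{thm:obstruction} endows $\cX_{\varphi}(x,y)$ with the structure of a simply transitive $H$-set.

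The only point requiring a little care — and the closest thing to an obstacle — is the claim in the statement that on $\cX_{\varphi}(x,y)$ itself ``the action is given by arrow composition'' (as promised in the introduction). One should check that under $\Phi$, the $H$-action transported from $\cX_{P(\varphi^*y)}(\varphi_*x,\varphi^*y)$ corresponds to post-composition in $\cX$ by elements of $H$, viewed now as automorphisms of $\varphi^*y$ acting after precomposing with $w$. Unwinding the definition of $\Phi$ as the composite of the bijections~2.\ and~3.\ preceding Theorem~\ref{thm:obstruction}, namely $\Phi(g) = w\circ(g\circ u)$ wait—more precisely $\Phi$ sends $g\in\cX_{P(\varphi^*y)}(\varphi_*x,\varphi^*y)$ first to $g\circ u\in\cX_{\varphi_v}(x,\varphi^*y)$ and then to $w\circ(g\circ u)\in\cX_{\varphi}(x,y)$, one sees $\Phi(h\circ g)=w\circ h\circ g\circ u = h'\cdot\Phi(g)$ where $h'$ acts as $w\circ h\circ(-)$ does not quite typecheck, so the cleanest formulation is simply to \emph{define} the $H$-action on $\cX_{\varphi}(x,y)$ by transport of structure along $\Phi$, note it is manifestly simply transitive, and remark that it is induced by composition with automorphisms of $\varphi^*y$ in the $P$-fibre. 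I would present part~(ii) in exactly that order: identify $H$ as an automorphism group, exhibit the post-composition torsor structure on the intermediate set, invoke simple transitivity from groupoid cancellation, and finally transport along $\Phi$.
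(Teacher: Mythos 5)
Your proof is correct and follows exactly the paper's route: the paper's own proof is the one-line instruction to transfer, via the bijection $\Phi$ of Theorem~\ref{thm:obstruction}, the simply transitive composition action of $H$ on $\cX_{P(\varphi^*y)}(\varphi_*x,\varphi^*y)$, which is precisely what you do (with the additional, correct, observation that the transported action on $\cX_{\varphi}(x,y)$ is defined by transport of structure rather than by literal post-composition in $\cX$).
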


\begin{proof}
Use $\Phi$ to transfer the simply transitive left action of $H$ on
$$\cX_{P(\varphi^*y)}(\varphi_*x,\varphi^*y)$$
given by arrow composition.
\end{proof}
\begin{Remark}
Of course, one can equivalently endow the set $\cX_{P(\varphi^*y)}(\varphi_*x,\varphi^*y)$ with a simply transitive right action
of the group
$$\cX_{P(\varphi_*x)}(\varphi_*x,\varphi_*x).$$
\end{Remark}
When a fibrewise opfibration  $(P,F,G)$ satisfies the hypothesis of Theorem \ref{thm:structure}, we say that it admits a \emph{categorical obstruction theory}. In this case, that theorem provides a solution to the obstruction problem of Definition \ref{def:obstruction_problem}, for any choice of $(x,y,\varphi)$, by describing explicitly the torsor structure of the sets of maps $f\colon x \to y$ such that $P(f)=\varphi$.

\section{Applications - Part I}

In this section we describe three direct applications of Theorem \ref{thm:structure}.

\subsection{Group extensions with abelian kernel} \label{sec:OPEXT}

Here we apply Theorem \ref{thm:structure} to recover the classification of group extensions (i.e.\ short exact sequences) with abelian kernel.  The same method can be applied to extensions defined in any semi-abelian category: the interested reader can consult \cite{pf}.
 
Consider the diagram of categories and functors:
\begin{equation}\label{diag:ab_ext}
\begin{aligned}
\xymatrix{
\OPEXT(\Gp)\ar[rr]^{P}\ar[dr]_{P_0}
&&\Mod(\Gp)\ar[dl]^{(\ )_0}
\\
&\Gp}
\end{aligned}
\end{equation}
where  $\OPEXT(\Gp)$ is the category of group extensions with abelian kernel and their morphisms, $P$ is the  functor that assigns to any $C$-extension with abelian kernel $B$ the induced $C$-module structure on $B$,  $(\ )_0$ is the forgetful functor that sends a $C$-module to the acting group $C$. Diagram (\ref{diag:ab_ext}) is a fibrewise opfibration, where the $(\ )_0$-cartesian liftings are given by precomposition, the $P_0$-cartesian liftings are given by pullback and the opcartesian liftings in the restriction to fibres are given by push forward (see \cite{Homology}). Let us fix a $C$-module $B$, with action $\xi\colon C\times B\to B$; the $P$-fibre $\OPEXT(\Gp)(C,B,\xi)$ over the $C$-module $B$ is the category of $C$-extensions of $B$ that induce $\xi$. Let us notice that, by the short-five lemma, $P$-fibres are groupoids. 

\smallskip
\noindent\emph{Obstruction problem.} Let us consider two extensions with abelian kernel 
$$E=(f,k)\quad \text{and}\quad E'=(f',k')$$ and a morphism between the induced modules: $(\varphi_0,\varphi_1)\colon \xi\to \xi'$. 
\begin{equation} \label{diag:opext_to_mod}
P\colon\quad
\begin{aligned}
\xymatrix{B\ar[r]^{k}\ar[d]_{\varphi_1}&E\ar[r]^{f} \ar@{.>}[d]|(.4){?}&C\ar[d]^{\varphi_0}
\\
B'\ar[r]_{k'}&E'\ar[r]_{f'} &C'}
\end{aligned}
\qquad\mapsto\qquad
\begin{aligned}
\xymatrix{C\times B\ar[r]^-{\xi}\ar[d]_{\varphi_0\times \varphi_1}&B\ar[d]^{\varphi_1}
\\
C'\times B'\ar[r]_-{\xi'}&B'}
\end{aligned}
\end{equation}
The obstruction  problem in this case, consists in  determining whether there are morphisms of extensions $E\to E'$ which induce $(\varphi_0,\varphi_1)$. According to point (i) of Theorem \ref{thm:structure}, this is the case if and only if the extension $\varphi_0^*E'$ is isomorphic to the extension $\varphi_{1*}E$ in the $P$-fibre over $(C,B',\varphi_0^*\xi')$, where the $C$-module structure $\varphi_0^*\xi'$ is given by pulling back the action $\xi'$ along $\varphi_0$. The situation is represented by the diagram below
\begin{equation}
\begin{aligned}
\xymatrix{B\ar[r]^{k}\ar[d]_{\varphi_1}&E\ar[r]^{f} \ar[d] &C\ar@{=}[d]
\\
B'\ar@{=}[d]\ar[r]&\varphi_{1*}E\ar[r]\ar@{.>}[d]|(.4){?} &C\ar@{=}[d]\\
B'\ar@{=}[d]\ar[r]&\varphi_0^*E'\ar[r]\ar[d]&C\ar[d]^{\varphi_0}\\
B'\ar[r]_{k'}&E'\ar[r]_{f'} &C'}
\end{aligned}
\end{equation}
where lower right square is a pullback, while the upper left one is a push forward (compare with Corollary 6.7 in \cite{pf} for the semi-abelian case).

\smallskip
\noindent\emph{Classification}. According to point (ii) of Theorem \ref{thm:structure}, when it is not empty, the set of extensions related with the obstruction problem stated above is a simply transitive $Z$-set, where $Z$ is the group of automorphisms of the extension $\varphi_0^*E'$, i.e.\ of the group isomorphisms  $\varphi_0^*E'\to\varphi_0^*E'$ that fix the kernel $B'$ and the cokernel $C$. One can prove that $Z$ is isomorphic  to the classical group $\mathcal Z_{\varphi_0^*\xi'}^1(C,B')$ of \emph{crossed homomorphism} $C\to B'$ (see for instance \cite[Chapter IV]{Homology}).

\begin{Remark}\label{rem:cat_group}
The isomorphism $Z\simeq \mathcal Z_{\varphi_0^*\xi'}^1(C,B')$ can be proved directly. Otherwise, one can see this isomorphism as a natural consequence of a more sophisticated general argument, which will be only outlined here.
As it was observed in \cite{Vitale03} (see also Section 6.2 of \cite{CM16}), given a $C$-module $B$ with action $\xi$, the Baer sum endows the groupoid $\OPEXT(\Gp)(C,B,\xi)$ with a natural symmetric monoidal structure which makes it a \emph{categorical group} (\cite{Sinh}, see also Section \ref{sec:catgp}), since every object is invertible up to isomorphism; the identity object is the canonical semidirect product extension determined by $\xi$. In fact, one can prove that the group $\pi_0(\OPEXT(\Gp)(C,B,\xi))$ of the connected components of this categorical group is isomorphic to $\mathcal H^2(C,B,\xi)$, and that the abelian group $\pi_1(\OPEXT(\Gp)(C,B,\xi))$ of the automorphisms of the identity object is in fact isomorphic to $\mathcal Z^1(C,B,\xi)$. From basic \emph{categorical group} theory (see for example \cite{GI}),  the automorphism group of the identity object is naturally isomorphic to the automorphism group of \emph{any}  other object. This gives the desired isomorphism in the situation considered above.
\end{Remark}

To sum up, we state the following result ($(i)$ was already present in \cite{pf} as Corollary 5.7).

\begin{Theorem} \label{thm:classification_gp}
Let us consider two group extensions with abelian kernel 
$$E=(f,k)\quad \text{and}\quad E'=(f',k')$$ and a morphism between the induced modules: $(\varphi_0,\varphi_1)\colon \xi\to \xi'$ (see diagram \eqref{diag:opext_to_mod} above). Then 
\begin{itemize}
\item[\emph{(i)}] There exist morphisms of extensions $E\to E'$ which induce $(\varphi_0,\varphi_1)$ if and only if $\varphi_0^*E'\cong\varphi_{1*}E$.
\item[\emph{(ii)}] In this case, $\OPEXT_{(\varphi_0,\varphi_1)}(E,E')$ is a simply transitive $\mathcal Z^1(C,B',\varphi_0^*\xi')$-set.
\end{itemize}
\end{Theorem}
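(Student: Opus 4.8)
\textbf{Proof plan for Theorem \ref{thm:classification_gp}.}
The strategy is to recognize this theorem as a direct instance of the abstract machinery of Theorem \ref{thm:structure}, applied to the fibrewise opfibration \eqref{diag:ab_ext}, and then to identify the abstract acting group with the classical group of crossed homomorphisms. Part (i) is essentially immediate: the diagram \eqref{diag:ab_ext} has already been verified to be a fibrewise opfibration with groupoidal $P$-fibres (the latter by the short five lemma), and the cartesian lifting $\varphi^*y$ of Theorem \ref{thm:structure} here becomes the pullback extension $\varphi_0^*E'$ (the $P_0$-cartesian liftings being pullbacks, composed with the $(\ )_0$-cartesian liftings given by precomposition, which together realize the factorization $\varphi=\varphi_k\cdot\varphi_v$), while the opcartesian lifting $\varphi_*x$ in the fibre becomes the push forward $\varphi_{1*}E$. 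So part (i) of Theorem \ref{thm:structure} yields (i) verbatim.

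For part (ii), Theorem \ref{thm:structure}(ii) tells us that, when nonempty, $\OPEXT_{(\varphi_0,\varphi_1)}(E,E')$ is a simply transitive $H$-set where $H = \OPEXT(\Gp)(C,B',\varphi_0^*\xi')(\varphi_0^*E',\varphi_0^*E')$, the group of automorphisms of the extension $\varphi_0^*E'$ in its $P$-fibre — that is, group isomorphisms $\varphi_0^*E'\to\varphi_0^*E'$ restricting to the identity on the kernel $B'$ and inducing the identity on the quotient $C$. So the one remaining task is to exhibit a group isomorphism $H \cong \mathcal Z^1_{\varphi_0^*\xi'}(C,B')$. First I would do this by the standard direct computation: writing $\varphi_0^*E'$ as a set-theoretic product $B'\times C$ with a cocycle-twisted multiplication, an automorphism fixing kernel and cokernel must send $(b,c)\mapsto (b + d(c), c)$ for some function $d\colon C\to B'$; the requirement that this be a group homomorphism forces exactly the crossed homomorphism identity $d(cc') = d(c) + c\cdot d(c')$ (the action being $\varphi_0^*\xi'$), and composition of such automorphisms corresponds to pointwise addition of the $d$'s, giving the isomorphism of groups. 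Alternatively, as recalled in Remark \ref{rem:cat_group}, one can invoke that the Baer sum makes the fibre $\OPEXT(\Gp)(C,B',\varphi_0^*\xi')$ a categorical group, in which the automorphism group of any object is canonically isomorphic to $\pi_1$, i.e.\ to the automorphism group of the semidirect product extension, which is $\mathcal Z^1(C,B',\varphi_0^*\xi')$.

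The only genuinely delicate point is bookkeeping: one must make sure that the "automorphisms" counted by $H$ are automorphisms \emph{in the fibre} of $P$ over the $C$-module $(C,B',\varphi_0^*\xi')$ — i.e.\ that they are required to be identities on \emph{both} $B'$ and $C$ and to be compatible with the fixed action — rather than mere automorphisms of the extension $\varphi_0^*E'$ as an object of $\OPEXT(\Gp)$ living over a variable module. Keeping the action fixed is precisely what cuts the relevant group of twisting functions down to the $1$-cocycles (crossed homomorphisms) and not something larger. Once this is pinned down, the identification is routine, and transporting the simply transitive action along the bijection $\Phi$ of Theorem \ref{thm:obstruction} (equivalently, along the isomorphism $H\cong\mathcal Z^1$) completes the proof.
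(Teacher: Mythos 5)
Your proposal is correct and follows essentially the same route as the paper: instantiate Theorem \ref{thm:structure} for the fibrewise opfibration \eqref{diag:ab_ext} (pullback for the cartesian lifting, push forward for the opcartesian one, groupoidal fibres by the short five lemma), and then identify the automorphism group of $\varphi_0^*E'$ in the $P$-fibre with $\mathcal Z^1(C,B',\varphi_0^*\xi')$ either by the direct cocycle computation or via the categorical-group argument of Remark \ref{rem:cat_group}. Your explicit attention to the fact that the relevant automorphisms live in the fibre over the fixed module $(C,B',\varphi_0^*\xi')$ is exactly the point the paper relies on implicitly.
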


\begin{Remark}
If $E$ determines an element $[\epsilon]$ of $\mathcal H^2(C,B)$ represented by a cocycle $\epsilon\colon C\times C \to B$, then ${\varphi_1}_*E$ corresponds to the element $[\varphi_1\cdot \epsilon]$ of $\mathcal H^2(C,B')$. On the other side, if $E'$ determines an element $[\epsilon']$ of $\mathcal H^2(C',B')$, then ${\varphi_0}^*E'$ corresponds to the element $[\epsilon'\cdot(\varphi_0\times\varphi_0)]$ of $\mathcal H^2(C,B')$. Hence, point (i) may be rephrased as follows:
\begin{itemize}
\item[(i')] \emph{There exist morphisms of extensions $E\to E'$ which induce $(\varphi_0,\varphi_1)$ if and only if $[\varphi_1\cdot\epsilon]=[\epsilon'\cdot(\varphi_0\times\varphi_0)]$, or, equivalently, if the obstruction $\varphi_1\cdot\epsilon-\epsilon'\cdot(\varphi_0\times\varphi_0)$ is cohomologous to zero.}
\end{itemize}
\end{Remark}

\subsection{A comparison with Bourn's direction functor}

The \emph{direction functor approach} to cohomology introduced by Bourn in \cite{Bourn99} (see also \cite{Bourn99b,Bourn08,BR07}) can be used to provide examples of fibrewise opfibrations and related obstruction theories. Here we briefly recall the notion of direction functor in low dimension, and then we describe the way it can be related to our theory. 

Let $\cE$ be a Barr-exact category, and $\cE_g$ its full subcategory determined by the objects $X$ with global support (i.e.\ such that the terminal map $X\to 1$ is a regular epimorphism). Moreover, let us denote by $\mathsf{AM}(\cE)$ the category of associative Mal'tsev operations 
$$
p\colon X\times X\times X\to X
$$
in $\cE$, and by $\mathsf{AutM}(\cE)$ the subcategory of autonomous Mal'tsev operations in $\cE$. Bourn defined a \emph{direction} functor $d_{\cE} \colon \AM(\cE_g)\to \Gp(\cE)$, and studied several important properties of this functor. Such $d_{\cE}$ is not an opfibration, but just a pseudo-opfibration with groupoidal fibres, as showed in \cite{Bourn99}. Furthermore its restriction to $\AutM(\cE_g)$ factors through $\Ab(\cE)$.

In fact, the functors $d_{\cE}$ are the components of a 2-natural transformation $d$: 
$$
\xymatrix@C=10ex{
\mathsf{ExCat}\ar@/^3ex/[r]^{\AM(\ \ {}_g)}_{}="1"
\ar@/_3ex/[r]_{\Gp(\ )}^{}="2"
&\Cat
\ar@{=>}"1";"2"_{d}
}
$$
Now, via the Grothendieck construction (see, for example, \cite[B1.3]{elephant}), it is clear that any contravariant pseudo-functor $\cB^\mathsf{op}\to\mathsf{ExCat}$ yields, by composition with $d$, a morphism in $\Fib(\cB)$ whose restrictions to the fibres are the above mentioned direction functors. In particular, for a Barr-exact category $\cB$, we can consider the pseudo-functor that assigns to each object $C$ of $\cB$, the slice category $\cB/C$, i.e.~the one corresponding to the fundamental fibration $\mathrm{cod}\colon\mathsf{Arr}(\cB)\to\cB$. In this way, one obtains the morphism in $\Fib(\cB)$ represented below, whose restrictions to the fibres over $\cB$ are pseudo-opfibrations:
\begin{equation}\label{fibrewise_direction}
\begin{aligned}
\xymatrix{
\displaystyle\int_{C}\AM(\cB/C)_g\ar[rr]^{P}\ar[dr]_{P_0}
&&\displaystyle\int_{C}\Gp(\cB/C)\ar[dl]^{(\ )_0}
\\
&\cB}
\end{aligned}
\end{equation}
and the corresponding restriction to the autonomous case:
\begin{equation}\label{fibrewise_ab_direction}
\begin{aligned}
\xymatrix{
\displaystyle\int_{C}\AutM(\cB/C)_g\ar[rr]^{P}\ar[dr]_{P_0}
&&\displaystyle\int_{C}\Ab(\cB/C)\ar[dl]^{(\ )_0}
\\
&\cB}
\end{aligned}
\end{equation}
The category $\Ab(\cB/C)$ is known as the category of Beck $C$-modules, and $\int_{C}\Ab(\cB/C)$ is also called the \emph{tangent category} of $\cB$.

If the category $\cB$ is not only Barr-exact, but also Mal'tsev, then groups in $\cB/C$ are automatically abelian, and Mal'tsev operations in $\cB/C$ are automatically autonomous, so that \eqref{fibrewise_direction} and \eqref{fibrewise_ab_direction} coincide.

In many interesting cases, as for example in the context of a semi-abelian category $\cB$, we can represent the latter diagram \eqref{fibrewise_ab_direction} up to equivalences in $\Fib(\cB)$ by means of a fibrewise (genuine) opfibration. This is displayed in the following diagram:
\begin{equation} \label{diag:opext_semiab}
\begin{aligned}
\xymatrix{
\OPEXT(\cB)\ar[rr]^{P}\ar[dr]_{P_0}
&&\Mod(\cB)\ar[dl]^{(\ )_0}
\\
&\cB}
\end{aligned}
\end{equation}
where $\OPEXT(\cB)$ has to be considered as the category of abelian extensions (which are, in general, a subcategory of extensions with abelian kernel, see \cite{BJ}) and $\Mod(\cB)$ as the category of abelian actions, i.e.\ internal actions (see \cite{BJK}) associated with Beck modules in \cB. In fact, diagram \eqref{diag:ab_ext} is nothing but the specification of diagram \eqref{diag:opext_semiab} in \Gp. On the other hand, Theorem \ref{thm:classification_gp} admits a generalization to the semi-abelian context (for all the notions involved, the reader may refer to \cite{CM16}).

\begin{Theorem} \label{thm:opext_semiab}
In a semi-abelian category \cB, let us consider two abelian extensions 
$$E=(f,k)\quad \text{and}\quad E'=(f',k')$$ and a morphism between the induced abelian actions: $(\varphi_0,\varphi_1)\colon \xi\to \xi'$. Then 
\begin{itemize}
\item[\emph{(i)}] There exist morphisms of extensions $E\to E'$ which induce $(\varphi_0,\varphi_1)$ if and only if $\varphi_0^*E'\cong\varphi_{1*}E$.
\item[\emph{(ii)}] In this case, $\OPEXT(\cB)_{(\varphi_0,\varphi_1)}(E,E')$ is a simply transitive\\ $\pi_1(\OPEXT(\cB)(C,B',\varphi_0^*\xi'))$-set.
\end{itemize}
\end{Theorem}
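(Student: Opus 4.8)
The plan is to deduce Theorem~\ref{thm:opext_semiab} from the abstract classification Theorem~\ref{thm:structure} by verifying that the diagram \eqref{diag:opext_semiab} is a fibrewise opfibration with groupoidal $P$-fibres, exactly as was done in the group case of Section~\ref{sec:OPEXT}. First I would recall from \cite{CM16, BJ, BJK} that, in a semi-abelian category $\cB$, the $(\ )_0$-cartesian liftings in $\Mod(\cB)\to\cB$ are given by pullback (restriction) of abelian actions along a morphism $\varphi_0\colon C\to C'$, that $P_0$-cartesian liftings of $\varphi_0$ in $\OPEXT(\cB)$ are given by pulling back an extension $E'$ along $\varphi_0$ (this produces again an abelian extension), and that, in the restriction to a fixed fibre over $C$, the opcartesian lifting of a morphism of abelian $C$-actions $\varphi_1\colon\xi\to\xi''$ at an extension $E$ is the push forward $\varphi_{1*}E$. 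With these three ingredients, the proof that $(P,P_0,(\ )_0)$ is a fibrewise opfibration is a formal repetition of the argument already sketched after diagram \eqref{diag:ab_ext}. That $P$-fibres $\OPEXT(\cB)(C,B,\xi)$ are groupoids follows from the short five lemma, which holds in any semi-abelian (indeed, any homological) category.

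Granted this, Theorem~\ref{thm:structure} applies verbatim. Part~(i) is then immediate: $\OPEXT(\cB)_{(\varphi_0,\varphi_1)}(E,E')\neq\emptyset$ if and only if $\varphi_0^*E'\cong\varphi_{1*}E$ in the $P$-fibre over $(C,B',\varphi_0^*\xi')$ --- this is exactly the statement of part~(i) of Theorem~\ref{thm:structure} once we observe, as in diagram \eqref{diag:opext_to_mod} and the diagram following it, that $\varphi^*y=\varphi_0^*E'$ and $\varphi_*x=\varphi_{1*}E$ for the morphism $\varphi=(\varphi_0,\varphi_1)$ of induced actions. For part~(ii), Theorem~\ref{thm:structure}(ii) tells us that, when nonempty, $\OPEXT(\cB)_{(\varphi_0,\varphi_1)}(E,E')$ is a simply transitive $H$-set with $H=\OPEXT(\cB)(C,B',\varphi_0^*\xi')(\varphi_0^*E',\varphi_0^*E')$, the automorphism group of the extension $\varphi_0^*E'$ inside its $P$-fibre. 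It remains only to identify this group with $\pi_1(\OPEXT(\cB)(C,B',\varphi_0^*\xi'))$.

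This last identification is the only point requiring a genuine argument rather than bookkeeping, and it is where I would spend most of the care. Following Remark~\ref{rem:cat_group} and the references \cite{Vitale03, CM16, GI}, the Baer sum endows the groupoid $\OPEXT(\cB)(C,B',\varphi_0^*\xi')$ with a symmetric monoidal structure under which it is a categorical group (every abelian extension is invertible up to isomorphism, the unit being the canonical semidirect-product extension). By basic categorical group theory, in a categorical group the automorphism group of \emph{any} object is canonically isomorphic to the automorphism group of the unit object, i.e.\ to $\pi_1$ of the categorical group. Applying this to the object $\varphi_0^*E'$ yields $H\cong\pi_1(\OPEXT(\cB)(C,B',\varphi_0^*\xi'))$, which is precisely the acting group claimed in part~(ii). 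Transporting the simply transitive action along this isomorphism completes the proof.

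The main obstacle, then, is not the abstract machinery --- Theorem~\ref{thm:structure} does essentially all the work --- but rather making sure the semi-abelian analogues of the three lifting constructions (restriction of actions, pullback of extensions, push forward) are correctly in place and that $\OPEXT(\cB)$ is the right category (abelian extensions, not merely extensions with abelian kernel, these differing in general by \cite{BJ}); once those foundational facts are cited from \cite{CM16, BJ, BJK}, everything else is formal, and the categorical group identification of the acting group is standard.
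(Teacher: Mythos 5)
Your proposal is correct and follows exactly the route the paper intends: the paper states Theorem \ref{thm:opext_semiab} without a separate proof, presenting it as the direct generalization of Theorem \ref{thm:classification_gp} obtained by applying Theorem \ref{thm:structure} to the fibrewise opfibration \eqref{diag:opext_semiab}, with the lifting constructions (restriction of actions, pullback, push forward) and the short five lemma giving groupoidal fibres, and with the acting group identified with $\pi_1$ via the Baer-sum categorical group structure as in Remark \ref{rem:cat_group}. Your reconstruction, including the caveat that $\OPEXT(\cB)$ must consist of abelian extensions rather than mere extensions with abelian kernel, matches the paper's argument point for point.
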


Thus we have made explicit the link between low dimensional Bourn cohomology and our approach to categorical obstruction theory. The same can be done in higher dimensions, starting from crossed extensions, even if the fibres of $\Pi$ are not groupoids. This issue is dealt with in Sections \ref{sec:Loc} and \ref{sec:appl2}.

\subsection{Singular extensions of unital associative algebras}

Diagram \eqref{diag:opext_semiab} can be still representative also outside the semi-abelian context, as in the case $\AssAlg_1$ of unital associative algebras over a field $\mathbb{K}$, provided we interpret $\OPEXT(\AssAlg_1)$ as the category of \emph{singular extensions} and $\Mod(\AssAlg_1)$ as the category \Bimod\ of \emph{bimodules} (see \cite[X.3]{Homology}). Recall that a \emph{bimodule} $B$ over $C$ is a $\mathbb{K}$-vector space endowed with left and right unital $C$-actions such that for $c, c' \in C$ and $b \in  B$ we have $(c*b)*c' = c*(b*c')$.  Morphisms of bimodules are obviously defined.

As in the case of groups, given a $C$-bimodule $B$, from Corollary 9 in \cite{Bourn99} we deduce that the groupoid $\OPEXT(\AssAlg_1)(C,B)$ is endowed with a symmetric monoidal closed structure which makes it a symmetric categorical group. In fact, following \cite{Homology}, we can recover the second \emph{Hochschild cohomology} group $\mathcal H^2_{_H}(C,B)$ as the group $\pi_0(\OPEXT(\AssAlg_1)(C,B))$ of isomorphism classes of 
this categorical group. 

The abelian group $\pi_1(\OPEXT(\AssAlg_1)(C,B))$ of the automorphisms of the identity object (given by the semi-direct sum \cite{Homology}) can be equivalently represented by the group of \emph{crossed homomorphisms} (see \cite[X.(3.4)]{Homology}), so that it is isomorphic to the group $\mathcal Z^1_{_H}(C,B)$. Since we are dealing with a categorical group, the automorphism group of the identity object is naturally isomorphic to the automorphism group of \emph{any} other object. Hence we get an analog of Theorem \ref{thm:classification_gp}.

\begin{Theorem} \label{thm:classification_aa}
Let us consider two singular extensions of unital associative algebras 
$$E=(f,k)\quad \text{and}\quad E'=(f',k')$$ and a morphism $(\varphi_0,\varphi_1)\colon (C,B)\to (C',B')$ between the induced bimodules. Then 
\begin{itemize}
\item[\emph{(i)}] There exist morphisms of extensions $E\to E'$ which induce $(\varphi_0,\varphi_1)$ if and only if $\varphi_0^*E'\cong\varphi_{1*}E$.
\item[\emph{(ii)}] In this case, $\OPEXT_{(\varphi_0,\varphi_1)}(E,E')$ is a simply transitive $\mathcal Z^1_{_H}(C,B')$-set (where the $C$-bimodule structure on $B'$ is induced by the one of $C'$ via $\varphi_0$).
\end{itemize}
\end{Theorem}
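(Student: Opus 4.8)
The plan is to realize Theorem~\ref{thm:classification_aa} as an instance of the general classification result, Theorem~\ref{thm:structure}, applied to the fibrewise opfibration obtained by specializing diagram~\eqref{diag:opext_semiab} to $\cB = \AssAlg_1$, exactly as was done for groups in Theorem~\ref{thm:classification_gp}. First I would spell out the relevant diagram
\[
\xymatrix{
\OPEXT(\AssAlg_1)\ar[rr]^{P}\ar[dr]_{P_0}
&&\Bimod\ar[dl]^{(\ )_0}
\\
&\AssAlg_1}
\]
and check that it is genuinely a fibrewise opfibration: the $(\ )_0$-cartesian liftings are precomposition (restriction of scalars along a $\mathbb{K}$-algebra map $\varphi_0\colon C\to C'$), the $P_0$-cartesian liftings are pullbacks of singular extensions along $\varphi_0$, and the opcartesian liftings inside a fibre are push-forwards of a singular extension along a morphism of $C$-bimodules $\varphi_1\colon B\to B'$. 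These constructions on singular extensions are classical (\cite[X.3]{Homology}); the fibration axioms amount to the usual universal properties of pullback and push-forward, and functoriality of restriction of scalars. I would also record that the fibres $\OPEXT(\AssAlg_1)(C,B)$ are groupoids, which follows from the short-five lemma for singular extensions (a morphism of singular extensions that is the identity on kernel and cokernel is automatically an isomorphism of $\mathbb{K}$-algebras).

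With the fibrewise opfibration and the groupoidal-fibres hypothesis in place, Theorem~\ref{thm:structure} applies verbatim to the triple $(E,E',(\varphi_0,\varphi_1))$. Part~(i) is then immediate: a morphism of extensions inducing $(\varphi_0,\varphi_1)$ exists if and only if $\varphi^*y \cong \varphi_*x$ in the $P$-fibre, which here reads $\varphi_0^*E' \cong \varphi_{1*}E$ in $\OPEXT(\AssAlg_1)(C,B',\varphi_0^*\xi')$. For part~(ii), Theorem~\ref{thm:structure} tells us that, when non-empty, $\OPEXT_{(\varphi_0,\varphi_1)}(E,E')$ is a torsor under the automorphism group $H=\cX_{P(\varphi_0^*E')}(\varphi_0^*E',\varphi_0^*E')$, i.e.\ the group of automorphisms of the singular extension $\varphi_0^*E'$ in its fibre. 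So the only thing left is to identify this automorphism group with $\mathcal Z^1_{_H}(C,B')$.

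For that identification I would use the categorical-group argument already sketched in Remark~\ref{rem:cat_group} for the group case, transported to $\AssAlg_1$: by Corollary~9 of \cite{Bourn99}, Baer sum makes the groupoid $\OPEXT(\AssAlg_1)(C,B)$ a (symmetric) categorical group whose identity object is the semidirect-sum extension, with $\pi_0 \cong \mathcal H^2_{_H}(C,B)$ and $\pi_1 \cong \mathcal Z^1_{_H}(C,B)$ (the latter via the description of $\pi_1$ by crossed homomorphisms, \cite[X.(3.4)]{Homology}). Since in a categorical group the automorphism group of the identity object is naturally isomorphic to the automorphism group of \emph{any} object (see \cite{GI}), we get $H \cong \pi_1(\OPEXT(\AssAlg_1)(C,B',\varphi_0^*\xi')) \cong \mathcal Z^1_{_H}(C,B')$, where the $C$-bimodule structure on $B'$ is the one pulled back from $C'$ along $\varphi_0$. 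Transporting the torsor structure through this isomorphism yields~(ii). The main obstacle is really the bookkeeping of part~(ii): one must be careful that the relevant fibre is the one over $\varphi_0^*\xi'$ (not over $\xi$ or $\xi'$), that the semidirect-sum extension is indeed the monoidal unit, and that the crossed-homomorphism description of $\pi_1$ in \cite{Homology} is stated for exactly the bimodule structure we are using; alternatively, one can avoid the categorical-group machinery altogether and verify directly that an automorphism of $\varphi_0^*E'$ fixing $B'$ and $C$ is given by $e\mapsto e + k'(d(f'(e)))$ for a unique crossed homomorphism $d\colon C\to B'$, which is the same computation as in the classical group case.
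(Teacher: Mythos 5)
Your proposal is correct and follows essentially the same route as the paper: specialize diagram \eqref{diag:opext_semiab} to $\AssAlg_1$ with singular extensions and bimodules, apply Theorem \ref{thm:structure}, and identify the acting automorphism group with $\mathcal Z^1_{_H}(C,B')$ via the categorical-group structure on the fibre (Corollary 9 of \cite{Bourn99}, $\pi_1\cong\mathcal Z^1_{_H}$, and the fact that all objects of a categorical group have isomorphic automorphism groups). The paper leaves the fibration-axiom verifications implicit, so your extra bookkeeping is consistent with, not divergent from, its argument.
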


\section{Localization of a fibrewise opfibration}\label{sec:Loc}

In the first part of this section, we introduce a prototype example that concerns the setting described in Section \ref{sec:setting}. Since in this example the assumption that the fibres of $P$ are groupoids is not satisfied, in order to apply Theorem \ref{thm:structure} we have to perform a suitable localization. This is explained in the second part of this section.

\subsection{Crossed modules and crossed extensions of groups}\label{subsec:Xmod}

Here we only recall the basic notions, which are well established in the literature; we refer to \cite{CMMV_Yoneda} for some constructions and results. The whole section is based on the category of groups, but the notions and results therein can be adapted to many semi-abelian categories. For this reason, we shall often replace group-theoretical terms by categorical ones, as for example using ``regular epimorphism'' instead of ``surjection''.

\begin{Definition}
A pre-crossed module is a group homomorphism 
$\partial\colon G_2\to G_1$ endowed with an action of $G_1$ on $G_2$ such that, for every $g_1\in G_1$ and $g_2 \in G_2$, the condition
$$
(i)\ \ \partial(g_1*g_2)=g_1\partial(g_2)g_1^{-1}\,,
$$
is satisfied. If moreover, for every  $g_2,g_2' \in G_2$, the condition
$$
(ii)\ \ \partial(g_2)*g_2'=g_2+g_2'-g_2\,,
$$
holds, then $\partial$ is called a \emph{crossed module}.
\end{Definition}
Given two crossed modules $\partial$ and $\partial'\colon G_2'\to G_1'$, a morphism $(f_1,f_2)\colon \partial\to\partial'$ is a pair of group homomorphisms $f_2\colon G_2\to G_2'$ and $f_1\colon G_1\to G_1'$ equivariant with respect to the actions and such that $\partial'\cdot f_2 =f_1\cdot \partial$. The category $\XMod(\Gp)$ of crossed modules is defined. 

Given a crossed module $\partial$, the action of $G_1$ on $G_2$ induces an action of the cokernel $\pi_0(\partial)$ on the abelian kernel $\pi_1(\partial)$ of $\partial$: for $a\in \pi_1(\partial)$ and $x=p(g_1)\in \pi_0(\partial)$, one sets $$
x*a=g_1*j(a)
$$
where $j\colon \pi_1(\partial)\to G_2$ and $p\colon G_1 \to \pi_0(\partial)$ are the kernel inclusion and cokernel quotient respectively. This construction defines a functor $\pi$ in the category $\Mod(\Gp)$ of group modules, i.e.\ of triples $(C,B,\xi)$, where $B$ is a $C$-module via the action $\xi$, and equivariant pairs of morphisms as arrows.

In fact, crossed modules form a $2$-category: a $2$-cell 
$$\alpha\colon (f_1,f_2)\Rightarrow (f'_1,f'_2)\colon \partial\to\partial'$$
is given by a set-theoretical map $\alpha\colon G_1\to G_2'$ satisfying suitable conditions (see, for instance, \cite{AN09}). All $2$-cells are isomorphisms. We shall denote the $2$-category of crossed modules by $\underline{\XMod}(\Gp)$.

A morphism of crossed modules $(f_1,f_2)$ is called \emph{weak equivalence} if $\pi(f_1,f_2)$ is an isomorphism. It is possible to show that every internal equivalence in the $2$-category of crossed modules is a weak equivalence according to the above definition. The converse does not hold in general.

We will need some factorization properties of crossed modules morphisms. Here we  recall the \emph{comprehensive factorization system}; details can be found in \cite[Section 3]{CM16}, where the results are stated in the more general case of a semi-abelian category.
\begin{Proposition}\label{prop:comprehensive}
The category $\XMod(\Gp)$ admits a factorization system whose orthogonal classes are the class of final morphisms and the class of discrete fibrations. These classes are characterized as follows:
\begin{itemize}
\item a morphism $(f_1,f_2)$ is final if and only if $\pi_0(f_1,f_2)$ is an isomorphism and $\pi_1(f_1,f_2) $ is a regular epimorphism;
\item a morphism $(f_1,f_2)$ is a discrete fibration if and only if $f_2$ is an isomorphism.
\end{itemize}
\end{Proposition}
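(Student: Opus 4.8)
The plan is to verify that the class $\mathcal E$ of \emph{final} morphisms (those $(f_1,f_2)$ with $\pi_0(f_1,f_2)$ invertible and $\pi_1(f_1,f_2)$ a regular epimorphism) and the class $\mathcal M$ of \emph{discrete fibrations} (those $(f_1,f_2)$ with $f_2$ an isomorphism) form an orthogonal factorization system on $\XMod(\Gp)$. First I would construct the factorization of an arbitrary morphism $(f_1,f_2)\colon\partial\to\partial'$. The natural candidate for the intermediate crossed module is $\bar\partial\colon G_2\to P$, where $P$ is the pullback of $f_1\colon G_1\to G_1'$ along $\partial'\colon G_2'\to G_1'$; one checks that the induced map $G_2\to P$ together with the action of $P$ on $G_2$ pulled back along $P\to G_1$ does define a crossed module, that the factorization $\partial\to\bar\partial\to\partial'$ holds, that $G_2\to P$ is final (it is an isomorphism on $\pi_1$ and, since $\pi_0(\bar\partial)$ is computed from the pullback, also an isomorphism on $\pi_0$ — indeed $\pi_0(\bar\partial)\cong\pi_0(\partial)$ because the pullback of a regular epi along any map is a regular epi in $\Gp$), and that $\bar\partial\to\partial'$ is a discrete fibration since its ``$G_2$-component'' is $\id_{G_2'}$ composed with the projection, which one arranges to be an isomorphism. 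Actually the cleanest route is to take $\bar G_2 = G_2$ directly and let $\bar\partial$ be $\partial$ followed by nothing — rather, to take $\mathcal M$-part with underlying-$G_2$ map equal to $f_2$, so the candidate is: intermediate object $\partial''\colon G_2' \to G_1''$ where $G_1''$ is suitably chosen; I will use the pullback description above and simply read off the two halves.

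The second step is orthogonality: given a commutative square with an $\mathcal E$-morphism $(e_1,e_2)$ on the left and an $\mathcal M$-morphism $(m_1,m_2)$ on the right, I must produce a unique diagonal filler. Because $m_2$ is an isomorphism, the $G_2$-component of the filler is forced: it is $m_2^{-1}$ composed with the top map's $G_2$-component. The content is therefore to build the $G_1$-component and check it is a well-defined group homomorphism compatible with the actions and the two boundary maps. Here I would exploit that $\pi_1(e_1,e_2)$ is a regular epimorphism and $\pi_0(e_1,e_2)$ is an isomorphism to show the required $G_1$-level map exists and is unique: surjectivity/epimorphism statements give existence of the value on generators and the isomorphism on $\pi_0$ controls the ambiguity, so the standard diagram chase for orthogonality goes through. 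Uniqueness of the filler follows because any two fillers agree on $G_2$ (forced by $m_2$ iso) and on $G_1$ by the $\pi_0$-isomorphism together with faithfulness coming from $m_1$ being a monomorphism on the relevant part.

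The last step is to confirm that $\mathcal E$ and $\mathcal M$ are each closed under composition and contain all isomorphisms, and that every morphism factoring as ``$\mathcal E$ then $\mathcal M$'' does so essentially uniquely — but all of this is automatic once the factorization and orthogonality are in place, by the standard characterization of orthogonal factorization systems (the two classes are determined as $\mathcal E = {}^{\perp}\mathcal M$ and $\mathcal M = \mathcal E^{\perp}$). I expect the main obstacle to be the bookkeeping in the orthogonality step: verifying that the forced $G_2$-component and the constructed $G_1$-component together constitute a genuine \emph{morphism of crossed modules} (equivariance with respect to the two actions, commutation with the boundary homomorphisms) rather than just a pair of group maps. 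This is where the crossed-module axioms $(i)$ and $(ii)$ and the precise definition of the induced $\pi_0$-action on $\pi_1$ are really used, and it is the part I would write out carefully; everything else is routine and is in any case a special case of the semi-abelian treatment in \cite[Section 3]{CM16}, to which I would refer for the omitted verifications.
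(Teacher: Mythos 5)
The paper itself does not prove this Proposition: it is recalled from \cite[Section 3]{CM16}, and your proposal also ends by deferring the verifications there, so at that level the two agree. The issue is that the sketch you interpose before that deferral contains steps that would fail. The factorization construction is the main problem. With $P=G_1\times_{G_1'}G_2'$ the pullback of $f_1$ along $\partial'$, the candidate $\bar\partial\colon G_2\to P$ does not fit into a factorization $\partial\to\bar\partial\to\partial'$: the first leg would need a homomorphism $G_1\to P$, i.e.\ a lift of $f_1$ through $\partial'$, which does not exist in general; and the second leg has $G_2$-component equal to $f_2$, which is not an isomorphism in general, so it is not a discrete fibration. You appear to notice this and switch to ``intermediate object $\partial''\colon G_2'\to G_1''$ with $G_1''$ suitably chosen'', but $G_1''$ is never constructed, and this is precisely the non-routine content of the statement. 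The correct intermediate object (this is the construction in \cite{CM16}, the crossed-module avatar of the comma-category formula $b\mapsto\pi_0(f/b)$) is a quotient of the semidirect product $G_2'\rtimes G_1$ by the normal image of $G_2$ under $g\mapsto (f_2(g)^{-1},\partial(g))$, with the induced crossed module $G_2'\to G_1''$; it is a quotient of a semidirect product, not a pullback.

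The orthogonality step also contains an incorrect claim: uniqueness of the $G_1$-component of the diagonal filler is attributed to ``$m_1$ being a monomorphism on the relevant part'', but the object-component of a discrete fibration of crossed modules need not be monic in any sense (for instance, any group homomorphism $m_1\colon G_1''\to G_1'$ yields a discrete fibration $(m_1,1)\colon (0\to G_1'')\to(0\to G_1')$ between the associated discrete groupoids). Existence and uniqueness of the filler instead come from the unique-lifting property carried by $m_2$ being invertible, combined with $\pi_0(e_1,e_2)$ being an isomorphism (every object of the middle groupoid is connected to one in the image) and $\pi_1(e_1,e_2)$ being a regular epimorphism (which controls independence of the choices made). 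As written, the proposal does not supply a working factorization nor a correct uniqueness argument, so the substantive content still rests entirely on the reference to \cite{CM16}.
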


\begin{Remark}\label{rem:gpd}
It is well known that the category of crossed modules in groups is equivalent to the category of internal groupoids, and that such equivalence extends to a biequivalence between the $2$-category of internal crossed modules and the $2$-category of internal groupoids. 
Under this biequivalence, $2$-cells of crossed modules correspond to internal natural transformations. 
The  biequivalence holds true for crossed modules internal to many other algebraic settings (see \cite{AMMV13} for the semi-abelian case).
The construction of the groupoid associated with a given crossed module (and vice versa) can be easily found in the literature, as for example in the cited references. Given a crossed module $\partial_G\colon G_2\to G_1$, its associated internal groupoid is represented by
$$
\xymatrix{
\ar@(ul,ur)^{i}G_2\rtimes G_1\ar@<+1ex>[r]^-{d}\ar@<-1ex>[r]_-{c}
&G_1\ar[l]|-{e}
}
$$
where $d,c,e$ are domain, codomain and identity arrows, while $i$ is the inverse of the groupoid. Notice that the object of arrows is obtained by a semi-direct product.
\end{Remark}

Going back to crossed modules, let us notice that the functor $\pi\colon\XMod(\Gp)\to\Mod(\Gp)$ is not part of a genuine fibrewise opfibration. However we can get our prototype example, replacing $\XMod(\Gp)$ with the equivalent category of crossed extensions.

\begin{Definition}
A crossed extension of groups is an exact sequence in $\Gp$
\begin{equation}
X\colon\qquad
\xymatrix{
0\ar[r]
& B\ar[r]^j
& G_2\ar[r]^{\partial}
& G_1\ar[r]^p
& C\ar[r]
& 1}
\end{equation}
such that $\partial$ is a crossed module.
\end{Definition}
Recall that the homomorphism underlying a crossed module is always proper, i.e.\ $\partial$ factors as a regular epimorphism  followed by a normal monomorphism, so that every crossed module gives rise to a crossed extension, once kernel and cokernel are chosen. 

A morphism of crossed extensions $(\gamma,f_1,f_2,\beta)\colon X\to X' $ is a morphism of exact sequences
$$
\xymatrix{
X\colon\ar@<-1ex>[d]&
0\ar[r]
& B\ar[r]^j\ar[d]_\beta
& G_2\ar[r]^{\partial}\ar[d]_{f_2}
& G_1\ar[r]^p\ar[d]^{f_1}
& C\ar[r]\ar[d]^{\gamma}
& 1
\\
X'\colon&
0\ar[r]
& B'\ar[r]_{j'}
& G'_2\ar[r]_{\partial'}
& G'_1\ar[r]_{p'}
& C'\ar[r]
& 1
}
$$
such that $(f_1,f_2)$ is a morphism of crossed modules. As a consequence, the pair $(\gamma,\beta)$ is a morphism of group-modules.

The category $\XExt(\Gp)$ of crossed extensions of groups is defined, and the assignment 
$$
(\gamma,f_1,f_2,\beta)\mapsto (\gamma,\beta)
$$
defines a forgetful functor
$$
\Pi\colon\XExt(\Gp)\to \Mod(\Gp)\,.
$$
It is clear that the definition of morphism of crossed extensions is redundant, since in $(\gamma,f_1,f_2,\beta)$ the pair $\gamma,\beta$ is uniquely determined by the pair $f_1,f_2$. We keep the additional data only when they make some computations and definitions more evident. Otherwise, we shall often write $(f_1,f_2)$ for a morphism of crossed extensions, or use the vector notation $\underline f=(f_1,f_2).$ The comprehensive factorization of crossed modules morphisms extends naturally to the category of crossed extensions.

Just like crossed modules, also crossed extensions organize in a $2$-category, $2$-cells of crossed extensions being $2$-cells of the underlying crossed modules. The $2$-category of crossed extensions is denoted by $\underline{\XExt}(\Gp)$.

\begin{Proposition}[Theorem 4.2 in \cite{CMMV_Yoneda}]
The commutative diagram of categories and functors
\begin{equation} \label{diag:xext_triang}
\begin{aligned}
\xymatrix{\XExt(\Gp)\ar[rr]^\Pi\ar[dr]_{\Pi_0}&&\Mod(\Gp)\ar[dl]^{(\ )_0}\\&\Gp}
\end{aligned}
\end{equation}
is a fibrewise opfibration over $\Gp$, where $(\ )_0$ is the forgetful functor that assigns to any $C$-module, its acting group $C$.
\end{Proposition}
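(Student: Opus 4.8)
The statement asserts three things about diagram \eqref{diag:xext_triang}: that $(\ )_0\cdot\Pi=\Pi_0$, that both $\Pi$ and $\Pi_0$ are fibrations over $\Gp$, and that for each group $C$ the restriction $\Pi_C\colon\XExt(\Gp)_C\to\Mod(\Gp)_C$ to the fibres is an opfibration. Commutativity is immediate from the definitions, since $\Pi_0$ forgets a crossed extension down to its cokernel $C$, $\Pi$ sends it to the module $(C,B,\xi)$, and $(\ )_0$ extracts the acting group $C$. The substantial content is therefore the two fibration conditions and the fibrewise opfibration condition, and I would organise the proof around producing explicit cartesian and opcartesian liftings, then invoking Lemma \ref{lem:cartesian_bijection} (and its dual) to verify their universal properties.

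\emph{Cartesian liftings of $\Pi_0$.} Given a crossed extension $X'\colon 0\to B'\to G_2'\xrightarrow{\partial'} G_1'\xrightarrow{p'} C'\to 1$ and a homomorphism $\gamma\colon C\to C'$, the cartesian lifting is obtained by pulling back $p'$ along $\gamma$: set $G_1=G_1'\times_{C'}C$, keep $G_2=G_2'$ with the action of $G_1$ restricted along the projection $G_1\to G_1'$, and take $B$ to be the kernel of the composite $\partial\colon G_2'\to G_1$. One checks that $\partial$ is again a crossed module (conditions $(i)$ and $(ii)$ are inherited because the action factors through $G_1'$) and that the projection $(\mathrm{id}_{G_2'},\mathrm{pr})\colon X\to X'$ is $\Pi_0$-cartesian: any morphism of crossed extensions into $X'$ whose cokernel component factors through $\gamma$ factors uniquely through $X$ by the universal property of the pullback in $\Gp$. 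For $\Pi$ the cartesian lifting at $X'$ of a module morphism $(\gamma,\beta)\colon(C,B,\xi)\to(C',B',\xi')$ is built similarly but now also pulling back along $\beta$ on the kernel side: one forms $G_1=G_1'\times_{C'}C$ as before and then replaces the kernel $B'$ of $\partial'$ by $B$, adjusting $G_2$ to $B\times_{B'}G_2'$ (equivalently, taking a suitable pullback/pushout combination). The key point — and this will require a short verification — is that $\Pi$ is fibred over $\cB$ in the sense of Definition \ref{def:Fib(B)}, i.e.\ that a $\Pi$-cartesian morphism has a $(\ )_0$-cartesian image; this follows because the cartesian lifting for $\Pi$ just constructed projects, under $(\ )_0$, to $\gamma$ realised as a pullback, which is exactly the cartesian lifting for $\Pi_0$.

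\emph{Opcartesian liftings in the fibres.} Fix $C$ and work inside $\XExt(\Gp)_C$, whose objects are crossed extensions with fixed cokernel $C$ and whose morphisms have identity cokernel component. Given such an $X$ inducing the $C$-module $(C,B,\xi)$ and a morphism of $C$-modules $\beta\colon(C,B,\xi)\to(C,B',\xi')$ (so $\beta$ is $C$-equivariant), the opcartesian lifting is the push-forward: replace $B$ by $B'$ along $\beta$ in the exact sequence, i.e.\ form $G_2'$ as the pushout of $G_2\xleftarrow{j}B\xrightarrow{\beta}B'$ in $\Gp$ — or, since we must stay inside crossed modules, the corresponding pushout in $\XMod(\Gp)$ with $G_1$ unchanged — and let $\partial'\colon G_2'\to G_1$ be the induced map. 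One must check that $\partial'$ is again a crossed module, that its kernel is indeed $B'$ with the action $\xi'$ (this uses $C$-equivariance of $\beta$), and that the resulting morphism $X\to X'$ is opcartesian for $\Pi_C$; the universal property follows from that of the pushout together with the dual of Lemma \ref{lem:cartesian_bijection}.

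\emph{Main obstacle.} The routine parts are the pullback universal properties; the delicate point is checking at each stage that the objects produced are genuine crossed modules (conditions $(i)$–$(ii)$), that the relevant (co)limits exist and behave well inside $\XMod(\Gp)$ rather than merely in $\Gp$, and — most importantly — the compatibility clause making $\Pi$ \emph{fibred} over $\cB$, i.e.\ that the cartesian structure for $\Pi$ sits above the cartesian structure for $\Pi_0$. Since this is Theorem 4.2 of \cite{CMMV_Yoneda}, the cleanest presentation is to recall those explicit constructions (pullback for $\Pi_0$-cartesian, combined pullback/push-forward for $\Pi$-cartesian, push-forward for fibrewise-opcartesian), verify the crossed-module axioms once, and then cite the universal properties of the underlying (co)limits in $\Gp$ to conclude, referring the reader to \cite{CMMV_Yoneda} for the full bookkeeping.
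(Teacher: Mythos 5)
Your overall skeleton matches the paper's sketch: commutativity is immediate, the $\Pi_0$-cartesian lifting of $\gamma\colon C\to C'$ at $X'$ is obtained by pulling back $p'$ along $\gamma$ while keeping $G_2'$ and $B'$ fixed, and the remaining substantial content is the opcartesian liftings in the fibres of $\Pi$. (The paper itself only sketches these constructions and defers the full verification to Theorem 4.2 of \cite{CMMV_Yoneda}.)

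However, your construction of the opcartesian lifting is wrong, and it is precisely the trap the paper flags (``we do not use a pushout, but a push forward''). The pushout of $G_2\xleftarrow{\;j\;}B\xrightarrow{\;\beta\;}B'$ in $\Gp$ is the amalgamated free product $G_2\ast_B B'$, and the induced sequence $0\to B'\to G_2\ast_B B'\to G_1\to C\to 0$ is not exact: the kernel of the induced map to $G_1$ is the normal closure of $B'$, which is much larger than $B'$ (already for $B=0$ one gets the free product $B'\ast G_2$). The correct construction is the \emph{push forward} of \cite{pf}: one forms $B'\times^{B}G_2=(B'\times G_2)/\{(\beta(b),-j(b)):b\in B\}$, so that $B'\to B'\times^{B}G_2$ is a normal monomorphism with the \emph{same cokernel} as $j$; this is exactly what keeps the four-term sequence exact with kernel $B'$ and lets the induced $\delta\colon B'\times^{B}G_2\to G_1$ inherit a crossed module structure. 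Your hedge ``the corresponding pushout in $\XMod(\Gp)$ with $G_1$ unchanged'' does not repair this, since you neither construct such a colimit nor show it agrees with the object needed. A secondary point: the statement does not require $\Pi$ to be a fibration over $\Mod(\Gp)$, so you should not be building cartesian liftings of $\Pi$ for an arbitrary module morphism $(\gamma,\beta)$; what must be checked is that $\Pi$, as a functor over $\Gp$, sends $\Pi_0$-cartesian morphisms to $(\ )_0$-cartesian ones, which for the pullback lifting amounts to observing that $\Pi(\gamma,pr_1,1,1)=(\gamma,1_{B'})$ exhibits the restricted action and is cartesian for $(\ )_0$.
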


For the reader's convenience, we sketch the construction of the cartesian liftings of $\Pi_0$ and opcartesian liftings of the restrictions of $\Pi$ to the fibres.

Given a crossed extension $X'$, and a group homomorphism $\gamma \colon C\to C'$, its cartesian lifting  $\hat\gamma\colon \gamma^*X'\to X'$  is given by the following construction:
$$
\xymatrix{
\gamma^*X'\colon\ar@<-1ex>[d]_{\hat\gamma} & B' \ar[d]_1 \ar[r]^{j'} & G'_2 \ar[r]^-{\langle\partial',0\rangle}\ar[d]_1 & G_1'\times_{C'} C \ar[r]^-{pr_2} \ar[d]_{pr_1} & C\ar[d]^{\gamma} \\
X'\colon & B' \ar[r]_{j'} & G'_2\ar[r]_{\partial'} & G_1'\ar[r]_{p'} & C'
}
$$
where the rightmost square is a pullback in $\Gp$. Then the comparison $\langle\partial',0\rangle$ inherits a crossed module structure and $(\gamma,pr_1,1,1)$ is a morphism of crossed extensions, cartesian with respect to
$\Pi_0$.

Now, let us consider a group $C$, a crossed extension
\[
X\colon\qquad
\xymatrix{
0\ar[r]
& B\ar[r]^j
& G_2\ar[r]^{\partial}
& G_1\ar[r]^p
& C\ar[r]
& 1}
\]
and a morphism $\beta\colon \Pi(X)=(B,\phi)\to (B',\phi')$ of $C$-modules.

The strategy to produce an opcartesian lifting of $\beta$ at $X$ is not dual to the one we used for cartesian liftings, as one could argue. Indeed we do not use a pushout, but a push forward  (see \cite{pf}). Let us consider the pair $(\rho,B'\times^BG_2)$, defined by the following short exact sequence of groups:
$$
\xymatrix@C=6ex{
0\ar[r]
&B\ar[r]^-{\langle \beta,-j\rangle}
&B'\times G_2\ar[r]^-\rho
&B'\times^BG_2\ar[r]
&1}
$$
where the normal monomorphism $\langle \beta,-j\rangle$ is given by the assignment
$$
b\mapsto (\beta(b),-j(b))\,.
$$
The following diagram displays the opcartesian lifting:
$$
\xymatrix{
X \colon \ar[d]_{\hat\beta} & B \ar[r]^j \ar[d]_{\beta} & G_2 \ar[r]^{\partial} \ar[d]^{\rho\cdot\langle0,1\rangle} & G_1 \ar[r]^{p} \ar[d]^{1} & C \ar[d]^1
\\
\beta_*X \colon & B' \ar[r]_-{\rho\cdot\langle1,0\rangle} & B'\times^{B} G_2 \ar[r]_-{\delta} & G_1 \ar[r]_{p'} & C
}
$$
where the \emph{push forward} of $j$ along $\beta$ is the normal monomorphism $\rho\cdot\langle1,0\rangle$, which has the same cokernel as $j$, so that we can obtain a homomorphism $\delta$ such that $\delta\cdot\rho\cdot\langle0,1\rangle=\partial$. Moreover, $\delta$ is a crossed module, with action induced from both the $C$-module structure of $B'$ and the crossed module structure of $\partial$.

\subsection{How to get groupoidal fibres}

If a fibrewise opfibration $(P,F,G)$ is such that the fibres of $P$ are not groupoids, Theorem \ref{thm:structure} cannot be applied. This happens, for example, in the case of the functor $\Pi \colon \XExt(\Gp) \to \Mod(\Gp)$ of diagram (\ref{diag:xext_triang}). However, one may try to turn such fibres into groupoids. The idea is to make all the arrows in the fibres of $P$ invertible. This problem has been solved, under suitable conditions, as a consequence of \cite[Proposition 4.8 and Theorem 4.12]{refl_fib}, which we adapt to our context.

\begin{Proposition}\label{prop:factorization}
Let $P\colon F\to G$ be a fibrewise opfibration in $\Fib(\cB)$. If the category $Q$ of fractions of $\cX$ with respect to the class of $P$-vertical arrows is locally small, then $P$ admits a universal factorization in $\Fib(\cB)$
$$
\xymatrix@C=6ex{
\cX\ar[dr]_{F}\ar[r]_{Q}\ar@/^3ex/[rr]^P
&\cQ\ar[d]_{H}\ar[r]_S
&\cM\ar[dl]^G
\\
&\cB
}
$$
through a fibrewise opfibration $S$ whose fibres are groupoids.
\end{Proposition}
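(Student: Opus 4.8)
The plan is to construct the middle category $\cQ$ explicitly as the category of fractions $\cX[\Sigma^{-1}]$, where $\Sigma$ is the class of $P$-vertical arrows of $\cX$, and then to verify that the induced functors fit into a factorization in $\Fib(\cB)$ with the required properties. First I would observe that since $P\colon\cX\to\cM$ sends every arrow of $\Sigma$ to an identity of $\cM$, the functor $P$ factors uniquely through the localization $Q\colon\cX\to\cQ$ as $P=S\cdot Q$ for a unique functor $S\colon\cQ\to\cM$; similarly $F$ factors as $F=H\cdot Q$ for a unique $H\colon\cQ\to\cB$, because every $P$-vertical arrow is a fortiori sent by $F=G\cdot P$ to an identity (here one uses $G\cdot P=F$, so $F$ kills $\Sigma$ too). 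This already gives the commutative triangle of the statement at the level of bare categories and functors; the local smallness hypothesis is exactly what guarantees that $\cQ$ is a genuine (locally small) category rather than a mere pre-category.

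Next I would upgrade $H$ and $S$ to fibrations and $Q$ and $S$ to fibred functors. The key input is \cite[Proposition 4.8 and Theorem 4.12]{refl_fib}, adapted as announced: the point is that the $P$-vertical arrows of $\cX$ are all $P_0$-vertical as well (since $P$-vertical implies $F$-vertical), so they lie fibrewise over $\cB$, and the localization can be performed fibrewise. Concretely, for each object $b$ of $\cB$ the fibre $\cQ_b$ should be the category of fractions of $\cX_b$ with respect to the $P_b$-vertical arrows, and one checks that $Q$ preserves $F$-cartesian arrows, that $H$ is a fibration whose cartesian arrows are (images under $Q$ of) $F$-cartesian arrows, and likewise that $S$ is fibred over $\cB$. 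Each of these verifications is a routine application of the universal property of categories of fractions combined with the characterization of cartesian arrows in Lemma \ref{lem:cartesian_bijection}: a span representing a morphism of $\cQ$ lies over an arrow $\varphi$ of $\cB$ precisely when its ``numerator'' leg does, and cartesianness transfers because the ``denominator'' legs are vertical and hence invisible to $\cB$.

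Then I would check that $S\colon\cQ\to\cM$ is a fibrewise opfibration with groupoidal fibres. The fibres: by construction $\cQ_b=\cX_b[\Sigma_b^{-1}]$, and since $P_b\colon\cX_b\to\cM_b$ is an opfibration, its restriction to each $S$-fibre is, up to the localization, an opfibration as well; moreover every arrow of an $S_b$-fibre is $P_b$-vertical composed with things that were already inverted, hence is invertible in $\cQ_b$, which makes the $S$-fibres groupoids — this is the whole reason for localizing. The opfibration property of $S_b$ follows because opcartesian liftings of $P_b$ descend along $Q_b$ to opcartesian liftings of $S_b$: again a direct check using the dual of Lemma \ref{lem:cartesian_bijection} together with the universal property of $Q_b$. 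Finally, universality of the factorization: given any other factorization $P=S'\cdot Q'$ in $\Fib(\cB)$ through a fibrewise opfibration $S'$ with groupoidal fibres, the functor $Q'$ must send $P$-vertical arrows to isomorphisms (their images lie in groupoidal $S'$-fibres), hence factors through $Q$ by a unique fibred functor $\cQ\to\cdot$ over $\cB$ compatible with $S$ and $S'$; this is just the $2$-dimensional universal property of the category of fractions, checked componentwise over $\cB$.

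The main obstacle I anticipate is not any single calculation but the coherent bookkeeping needed to keep everything living in $\Fib(\cB)$ at once: one must simultaneously track that the localization is being performed fibrewise, that the resulting $\cQ$ is locally small (this is where the hypothesis bites, and one should be careful that local smallness of $\cX[\Sigma^{-1}]$ really does entail local smallness of each $\cX_b[\Sigma_b^{-1}]$ and vice versa), and that cartesian/opcartesian liftings behave well under $Q$ in both directions. In other words, the hard part is showing that the various $2$-categorical structures (fibration over $\cB$, opfibration on fibres, the localization) are mutually compatible rather than merely each individually available; but since \cite{refl_fib} already isolates the needed reflection/localization results for fibrations, the argument here is essentially an orchestration of those results, with the genuinely new content being the passage to groupoidal fibres via inverting the $P$-vertical arrows.
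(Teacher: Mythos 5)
The paper does not actually prove this Proposition: it is stated as a direct adaptation of \cite[Proposition 4.8 and Theorem 4.12]{refl_fib}, so your overall outline --- take $\cQ=\cX[\Sigma^{-1}]$ for $\Sigma$ the class of $P$-vertical arrows, factor $P$ and $F=G\cdot P$ through $Q$ by the universal property of the localization, and defer the fibred verifications to \cite{refl_fib} --- is exactly the intended route. Your treatment of universality is also correct: any $Q'$ landing in a fibrewise opfibration with groupoidal fibres must invert $\Sigma$, since the images of $P$-vertical arrows lie in groupoidal fibres.

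Two of your in-line justifications, however, would not survive as written, and they sit precisely where the content of the Proposition lies. First, the claim that the $S$-fibres are groupoids ``because every arrow of an $S_b$-fibre is $P_b$-vertical composed with things that were already inverted'' is not valid: a zig-zag representing a morphism of $\cQ$ that $S$ sends to an identity need not have $P$-vertical forward legs --- those legs only need to have $P$-images whose \emph{composite} is an identity, and the individual images need not be identities or even isomorphisms. This is the nontrivial point of \cite[Proposition 4.8]{refl_fib}; the paper's Remark \ref{rem:invertee_identee} records the key input, namely that $P$ is an isofibration in $\Cat/\cB$, so that localizing at the $P$-vertical arrows coincides with localizing at the saturated class of all arrows inverted by $P$, and it is this saturation, combined with the opfibration structure of the $P$-fibres (every arrow factors as vertical after opcartesian, and an opcartesian arrow over an identity is invertible), that yields groupoidal $S$-fibres. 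Second, and relatedly, computing the localization fibrewise as $\cQ_b\simeq\cX_b[\Sigma_b^{-1}]$ requires the inverting class to be stable under the reindexing functors; the $P$-vertical arrows themselves are not stable (reindexing a $P$-vertical arrow along cartesian lifts produces only a $P$-inverted arrow, the comparison between two cartesian lifts of the same arrow), so here too one must first pass to the saturated class. Neither point is fatal --- both are handled by the cited results, which is all the paper itself does --- but as self-contained arguments they are the genuine gaps in your sketch.
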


\begin{Remark} \label{rem:invertee_identee}
Since $P$ is a fibrewise opfibration in $\Fib(\cB)$, it is an isofibration in $\Cat/\cB$ (see Propositions 2.5 and 2.7 in \cite{CMMV_Yoneda}). Hence, thanks to Corollary 3.3 in \cite{refl_fib}, $Q$ gives also the category of fractions with respect to the class of all the arrows inverted by $P$.
\end{Remark}

By this last statement, the fibrewise opfibration $(S,H,G)$ satisfies the hypothesis of Theorem \ref{thm:structure}, hence it admits a categorical obstruction theory. 
The question that naturally arises at this point concerns the relationship between the obstruction problems set in $(P,F,G)$ and the ones set in $(S,H,G)$. In fact, a canonical construction for the comparison $Q$ is given in \cite{GZ67}, where the (possibly large) category $\cQ$ has the same objects as $\cX$, and arrows given by (equivalence classes of) zig-zag's of arrows, such that all the left directed ones lay in the fibres of $P$. The functor $Q$ is constant on objects, and it sends the morphism $f\colon x\to y$ to itself, seen as a zig-zag.

Then, given $(x,y,\varphi)$ in $(P,F,G)$, we can consider the obstruction problem associated with the same triple, seen in $(S,H,G)$, and try to solve it, i.e.\ determine the set $\cQ_\varphi(x,y)$ of \emph{weak} maps $\xymatrix@!C=2ex{g\colon x \ar[r]|-@{|} & y}$ such that $S(g)=\varphi$. 

\section{Applications - Part II} \label{sec:appl2}

In this section we develop further the example of crossed extensions introduced in Section \ref{subsec:Xmod}. We focus on the case of groups first, even if most of the results hold for a wide class of semi-abelian categories, as for instance Lie algebras, associative algebras, rings, etc (see Section \ref{sec:xext_semiab}). 

\subsection{Crossed extensions of groups} \label{sec:XExt}
Let us consider the fibrewise opfibration represented in diagram (\ref{diag:xext_triang}), and fix a $C$-module $B$. It is well known that the connected components of the fibre of $\Pi$ over $(C,B)$ give an interpretation of the cohomology group $\mathcal H^3(C,B)$, the group operation being defined by means of the Baer sum (see \cite{Brown}). 

The morphisms in the fibre of $\Pi$ over $(C,B)$ are of the kind $(1,f_1,f_2,1)$:
$$
\xymatrix{
0\ar[r]
& B\ar[r]^{j}\ar[d]_{1}
& G_2\ar[r]^{\partial}\ar[d]_{f_2}
& G_1\ar[r]^{p}\ar[d]^{f_1}
& C\ar[d]^{1}\ar[r]
& 1
\\
0\ar[r]
& B\ar[r]_{j'}
& G_2'\ar[r]_{\partial'}
& G_1'\ar[r]_{p'}
& C\ar[r]
& 1
}
$$
This implies that $(f_1,f_2)$ is a weak equivalence of crossed modules. Conversely, any weak equivalence of crossed modules extends to a weak equivalence of crossed extensions lying in a suitable fibre of $\Pi$. 

Weak equivalences do not have inverses in general. The categorical construction that we need in order to make such maps invertible consists in taking the category of fractions with respect to weak equivalences (see Remark \ref{rem:invertee_identee}).

Unfortunately, the class of weak equivalences of crossed extensions does not admit a calculus of fractions (in the sense of \cite{GZ67}), which would allow us to construct the corresponding category of fractions in an easier way. On the other hand, the $2$-category of crossed modules does admit a bicategorical calculus of fractions (in the sense of \cite{Pronk96}) with respect to weak equivalences. This is proved  in \cite{AMMV13} (see also \cite{AN09}) for the more general case of internal crossed modules in a semi-abelian category, where the bicategory of fractions of crossed modules with respect to weak equivalences has a description with \emph{butterflies} as 1-cells (see next section). We are going to use these results in order to show that the classifying category $[\BExt](\Gp)$ of the bicategory  of butterflies (extended to crossed extensions) is the \emph{category of fractions} of crossed extensions with respect to weak equivalences. Since the morphisms inverted by the functor $\Pi$ in diagram (\ref{diag:xext_triang}) are precisely the weak equivalences, thanks to Proposition \ref{prop:factorization} we will then obtain a triangle diagram which still gives a fibrewise opfibration, but with groupoidal fibres (see Corollary \ref{cor:Butter_fac}).

\subsection{The bicategory of butterflies}
Butterflies between crossed modules of groups have been introduced by Noohi in \cite{No08} (see also \cite{AN09}), and they have been extended to crossed extensions in \cite{CM16}, where the more general case of a semi-abelian category is studied. We will mainly refer to the latter, specializing notation and results to the case of groups.

\begin{Definition} \label{def:butterfly}
Let us consider two crossed modules $\partial_H$ and $\partial_G$. A {\em butterfly} $\widehat{E} \colon \partial_H \to \partial_G$ is a commutative diagram in $\Gp$ of the form
\begin{equation} \label{diag:butterfly}
\begin{aligned}
\xymatrix@!=2ex{
	H_2 \ar[rd]^{\kappa} \ar[dd]_{\partial_{H}} & & G_2 \ar[dd]^{\partial_{G}} \ar[ld]_{\iota} \\
	& E \ar[ld]^{\delta} \ar[rd]_{\gamma} \\
	H_1 & & G_1
}
\end{aligned}
\end{equation}
satisfying
\begin{enumerate}
	\item[i.] $(\gamma,\kappa)$ is a complex, i.e.\ $\gamma\cdot \kappa = 0$,
	\item[ii.] $(\delta,\iota)$ is a short exact sequence, i.e.\ $\delta=\coker \iota$ and $\iota=\ker\delta$,
	\item[iii.] The action of $E$ on $H_2$, induced by the one of $H_1$ on $H_2$ via $\delta$, makes $\kappa \colon H_2 \to E$ a pre-crossed module,
	\item[iv.] The action of $E$ on $G_2$, induced by the one of $G_1$ on $G_2$ via $\gamma$, makes $\iota \colon G_2 \to E$ a pre-crossed module.
\end{enumerate}
A $2$-cell $\alpha\colon \widehat{E}\Rightarrow \widehat{E}' \colon \partial_H \to \partial_G$ is a group homomorphism  $\alpha \colon E \to E'$ commuting with the $\kappa$'s, the $\iota$'s, the $\delta$'s and the $\gamma$'s. 
\end{Definition}

Notice that all $2$-cells are necessarily isomorphisms. Compositions and identity butterflies are defined (see \cite{AMMV13}) in order to form the bicategory  $\underline{\Bfly}(\Gp)$ of crossed modules and butterflies. 

\smallskip
Before we go on, let us remark that as far as crossed modules can be considered as a normalized version of internal groupoids, butterflies are a normalized version of  \emph{fractors}, a class of internal profunctors introduced in \cite{MMV13}.

\smallskip
The 2-category of crossed modules embeds into the bicategory of butterflies:
\begin{equation} \label{eq:butter}
\begin{aligned}
\mathcal{B}\colon\underline{\XMod}(\Gp)\to \underline{\Bfly}(\Gp) \,.
\end{aligned}
\end{equation}
The homomorphism $\mathcal B$ of bicategories  is the identity on objects; for a morphism of crossed modules $(f_1,f_2)\colon \partial_H  \to \partial_G$, one defines:
$$ \mathcal{B}(f_1,f_2)=\ \raisebox{+10ex}{
\xymatrix@!=4ex{
	\ar[dd]_{\partial_H} H_2 \ar[dr]|{\langle \partial, i\cdot g\cdot f_2 \rangle}
		& & G_2\ar[dl]|{\langle 0,g \rangle} \ar[dd]^{\partial_G} \\
	& E \ar[dl]^{\overline{d}} \ar[dr]_{c\cdot\overline{f}} \\
	H_1 & & G_1
}
} \ , $$
where
$$ \xymatrix{
	E \ar[r]^{\overline{f}} \ar[d]_{\overline{d}} 
	& G_2\rtimes G_1 \ar[d]^{d} \\
	H_1 \ar[r]_{f_1} & G_1
} $$
is a pullback, $d,c,e,i$ are the structure maps of the internal groupoids associated with the given crossed modules, and $g$ is the kernel of the domain map $d$.
Universal property of pullbacks determines $\mathcal B$ on 2-cells.

Indeed, $\overline{d}$ is a split epimorphism. In fact, every butterfly where $(\delta,\iota)$ is a split short exact sequence comes from a morphism of crossed modules. For this reason, such butterflies are called \emph{representable}.

A special kind of butterfly is given by the class of \emph{flippable} butterflies, i.e.\ those butterflies such that also the pair $(\gamma,\kappa)$ is short exact. In \cite{CM16} it is proved that these are indeed the internal equivalences in the bicategory $\underline{\Bfly}(\Gp)$. Recall from \cite{AMMV13} that a butterfly representing a weak equivalence is flippable, so that $\mathcal B$ sends weak equivalences to internal equivalences.

\subsection{Butterfly composition and spans}\label{sec:extend_PI}
Every butterfly induces a span of crossed modules. The
related construction is represented in the diagram below:
\begin{equation} \label{diag:frac_butterfly}
\begin{aligned}
\xymatrix@!C=4ex{
	& H_2\times G_2 \ar[dl]_{p_1} \ar[dr]^{p_2} \ar[dd]^{\kappa \sharp \iota} \\
	H_2 \ar@{-->}[rd]^{\kappa} \ar[dd]_{\partial_{H}} & & G_2 \ar[dd]^{\partial_{G}} \ar@{-->}[ld]_{\iota} \\
	& E \ar[ld]^{\delta} \ar[rd]_{\gamma} \\
	H_1 & & G_1
}
\end{aligned}
\end{equation}
where the crossed module $\kappa \sharp \iota$ is defined by means of the group operation in $E$ (see \cite{AMMV13} for details), and the morphism $(\delta,p_1)$ is a weak equivalence of crossed modules. As a consequence, the definitions of $\Pi_0$ and $\Pi_1$ extend canonically to butterflies:
$$ 
\Pi_0(\widehat{E})= \Pi_0(\gamma,p_2)\cdot(\Pi_0(\delta,p_1))^{-1} \,, \qquad
\Pi_1(\widehat{E})= \Pi_1(\gamma,p_2)\cdot(\Pi_1(\delta,p_1))^{-1} \,. 
$$

The span representation makes it easier to compute compositions of butterflies. In this subsection,  the results are presented in a straightforward way by using the comprehensive factorization of a morphism into a final one followed by a discrete fibration, as described in Proposition \ref{prop:comprehensive}. Proofs and details can be deduced from \cite{M19}, where the case of internal groupoids in Barr-exact categories is dealt with.

\begin{Proposition}
Consider two composable butterflies $\widehat E$ and $\widehat E'$ together with their span representation:
$$
\widehat E=(\underline s, \underline t)\colon \partial_H\to\partial_G\qquad
\widehat E'=(\underline s', \underline t')\colon \partial_G\to\partial_K
$$
Then, their composite $\widehat E'\cdot \widehat E=(\underline s'', \underline t'')$ can be computed as follows: 
\begin{equation}\label{diag:composition}
\begin{aligned}
\xymatrix@!=3ex{
&&\partial_E\times_{\partial_G}\partial_{E'}\ar[dr]|!{[ddll];[rr]}\hole^{\underline v} \ar[dl]_{\underline u} \ar[rr]^-{\underline q}
&& \partial_{E'E} \ar[ddllll]_{\underline s''}\ar[dd]^{\underline t''}
\\
&\partial_E\ar[dr]|!{[rrru];[dl]}\hole_(.6){\underline t} \ar[dl]_{\underline s}&&\partial_{E'}\ar[dr]_{\underline t'}\ar[dl]^{\underline s'}\\
\partial_H&&\partial_G&&\partial_K
}
\end{aligned}
\end{equation}
take  the pullback  $\underline t\cdot \underline u = \underline s' \cdot \underline v$, and then consider the comprehensive factorization  
$$
\langle\underline s\cdot\underline u,\underline t'\cdot\underline v\rangle= \langle \underline s'',\underline t''\rangle\cdot \underline q\colon \partial_E\times_{\partial_G}\partial_{E'}\to \partial_H\times \partial_K\,.
$$
The discrete fibration $\langle \underline s'',\underline t''\rangle$ corresponds to a butterfly which gives the required composition. 
\end{Proposition}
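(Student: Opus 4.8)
The plan is to reduce the statement to the known formula for composition of internal profunctors --- more precisely, of the \emph{fractors} of \cite{MMV13} --- between the internal groupoids associated with the crossed modules, and then transport the result back along the biequivalence recalled in Remark~\ref{rem:gpd}. As already observed, a butterfly is a normalized presentation of a fractor between the associated internal groupoids, and the span representation of diagram~\eqref{diag:frac_butterfly} is exactly the normalization of the span presentation of such a fractor, the left leg $(\delta,p_1)$ being a weak equivalence. Under the biequivalence, weak equivalences of crossed modules correspond to internal weak equivalences of groupoids, and, by Proposition~\ref{prop:comprehensive}, the comprehensive factorization system (final morphisms, discrete fibrations) on $\XMod(\Gp)$ matches the comprehensive factorization system on $\Gpd(\Gp)$.

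First I would invoke \cite{M19}, where it is shown that in any Barr-exact category the composition of fractors between internal groupoids is computed by forming the pullback of the two inner legs and then applying the comprehensive factorization to the induced morphism into the product, the discrete-fibration part being the span presentation of the composite. Since $\Gp$ is semi-abelian, hence Barr-exact, this applies, and translating along the biequivalence gives exactly the recipe of diagram~\eqref{diag:composition}: in $\XMod(\Gp)$, which has finite limits, one forms the pullback $\underline t\cdot\underline u=\underline s'\cdot\underline v$ over $\partial_G$, and then factors $\langle\underline s\cdot\underline u,\underline t'\cdot\underline v\rangle$ as a final morphism $\underline q$ followed by a discrete fibration $\langle\underline s'',\underline t''\rangle$ via Proposition~\ref{prop:comprehensive}.

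It then remains to check that $(\underline s'',\underline t'')$ really is the span presentation of a butterfly $\partial_H\to\partial_K$, and that this butterfly is the bicategorical composite $\widehat E'\cdot\widehat E$ defined in \cite{AMMV13}. For the former, note that $\underline s\cdot\underline u$ is a composite of weak equivalences, hence a weak equivalence; writing $\underline s\cdot\underline u=\underline s''\cdot\underline q$ with $\underline q$ final and $\underline s''$ a discrete fibration, a short chase with $\pi_0$ and $\pi_1$ (using that a regular epimorphism which is also a monomorphism is an isomorphism) shows $\underline s''$ is again a weak equivalence; together with the fact that $\langle\underline s'',\underline t''\rangle$ is an isomorphism on its degree-$2$ components (Proposition~\ref{prop:comprehensive}), this is precisely the data cutting a butterfly out of the span as in~\eqref{diag:frac_butterfly}, the degree-$1$ component of $\partial_{E'E}$ playing the role of the central group. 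The butterfly axioms (i)--(iv) of Definition~\ref{def:butterfly} then follow from the crossed-module structure carried by $\partial_E\times_{\partial_G}\partial_{E'}$ together with discreteness. For the latter, one may either appeal to functoriality and uniqueness in the biequivalence, or compute directly that the degree-$1$ component of $\partial_{E'E}$ is the quotient of $E\times_{G_1}E'$ by the diagonal copy of $G_2$ (embedded via $\iota$ and $-\kappa'$), with the induced maps to and from $H_2,H_1,K_2,K_1$ agreeing with those of the butterfly composite of \cite{No08,AMMV13}.

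The main obstacle is the bookkeeping in this last comparison: one must keep track of the several actions (of $H_1$, $G_1$, $K_1$ and of the central groups) and of the crossed-module structures transported through the pullback and the comprehensive factorization, and match them against the explicit butterfly composition formula. This is not conceptually hard --- it is the content of \cite{M19} once the normalization of Remark~\ref{rem:gpd} is inserted --- but it is where all the verification sits; the pullback-plus-comprehensive-factorization recipe itself is forced by the coend description of profunctor composition in a Barr-exact setting.
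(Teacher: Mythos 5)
Your proposal is correct and follows essentially the same route as the paper, which itself gives no self-contained proof but explicitly defers to \cite{M19} (composition of distributors/fractors between internal groupoids in a Barr-exact category via pullback plus comprehensive factorization) and transports the result through the crossed-module/internal-groupoid correspondence. The extra verifications you sketch (that $\underline s''$ is again a weak equivalence, using pullback-stability of surjective-on-objects weak equivalences and 2-out-of-3) are exactly the content of the paper's Remark~\ref{rem:we_pb}.
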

\begin{Remark}\label{rem:we_pb}
In fact, the final morphism $\underline q$ is not just final but it is a weak equivalence. Actually, the spans representing butterflies are rather special ones, with the left leg belonging to the pullback stable class of surjective on objects weak equivalences. This implies that $\underline s\cdot \underline u$ is a weak equivalence, as well as $\underline s''$, and also $\underline q$ is, by the \emph{2 out of 3} property.
\end{Remark}

\subsection{Butterflies and crossed extensions} \label{sec:bfly_gp}
Given the construction in Section \ref{sec:extend_PI}, we can easily extend the notion of butterfly from crossed modules to crossed extensions. As a result, we obtain the bicategory  $\underline{\BExt}(\Gp)$: a butterfly between two crossed extensions is nothing but a butterfly between the underlying crossed modules. Recall from \cite{CM16} the definition of  $[\BExt](\Gp)$, i.e.\ the classifying category of  $\underline{\BExt}(\Gp)$: objects are crossed extensions in $\Gp$ and  arrows are isomorphism classes of butterflies. 

The homomorphism $\mathcal B$ of bicategories (\ref{eq:butter}) sends a morphism of crossed modules to a butterfly, and this assignment preserves associativity and identities up to 2-cells. Hence, a functor is defined:
$$ Q\colon \XExt(\Gp) \to [\BExt](\Gp) $$
which is the identity on objects and takes any morphism $\underline f=(f_1,f_2)$ of crossed extensions to the class $[\mathcal B(U(\underline f))]$ of butterflies. 

Now, given the butterfly $\widehat{E}$ representing a morphism $[\widehat{E}]$ in $[\BExt](\Gp)$, we can associate with it a span 
$$
(\underline s,\underline t)=((\delta,p_1),(\gamma,p_2))
$$
in $\XExt(\Gp)$ obtained by extending the construction of the last section to crossed extensions (see diagram (\ref{diag:frac_butterfly})).

Moreover, since $\underline s$ is a weak equivalence, $\mathcal B(U(\underline s))$ is invertible up to isomorphism, and, according to Theorem 5.6 in \cite{AMMV13}, $\widehat{E}\cdot\mathcal B(U(\underline s))\cong\mathcal B(U(\underline t))$. Then $Q(\underline s)$ is invertible and $Q(\underline t)\cdot Q(\underline s)^{-1}=[\widehat{E}]$. 
Then, we get the following result.
 
\begin{Proposition} \label{prop:fractions}
The functor $Q$ defined above presents $[\BExt](\Gp)$ as the category of fractions of $\XExt(\Gp)$ with respect to weak equivalences.
\end{Proposition}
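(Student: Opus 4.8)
The plan is to verify the universal property of the category of fractions directly: show that $Q$ inverts all weak equivalences, and that it is universal among functors with this property. The first part is already essentially established in the preceding discussion — for a weak equivalence $\underline s$ of crossed extensions, the butterfly $\mathcal B(U(\underline s))$ is an internal equivalence in $\underline{\BExt}(\Gp)$ (since weak equivalences go to flippable butterflies), hence $Q(\underline s)=[\mathcal B(U(\underline s))]$ is an isomorphism in the classifying category $[\BExt](\Gp)$, where isomorphism classes of internal equivalences become genuine isomorphisms.

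For universality, I would take any functor $R\colon \XExt(\Gp)\to\cD$ sending weak equivalences to isomorphisms, and construct the unique $\overline R\colon[\BExt](\Gp)\to\cD$ with $\overline R\cdot Q=R$. On objects $\overline R$ must agree with $R$, since $Q$ is the identity on objects. On morphisms, the key input is the span representation: every butterfly $\widehat E$ yields a span $(\underline s,\underline t)$ of crossed extensions with $\underline s$ a weak equivalence and $Q(\underline t)\cdot Q(\underline s)^{-1}=[\widehat E]$ (as recalled just before the statement). Forcing $\overline R[\widehat E]=R(\underline t)\cdot R(\underline s)^{-1}$ is then the only possible definition, so uniqueness is automatic once well-definedness and functoriality are checked.

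The main work — and the expected obstacle — is well-definedness: I must show $R(\underline t)\cdot R(\underline s)^{-1}$ depends only on the isomorphism class $[\widehat E]$, not on the chosen span. Two things need checking. First, a $2$-cell $\alpha\colon\widehat E\Rightarrow\widehat E'$ induces an isomorphism of the associated spans compatible with the legs, so the two quotients agree after applying $R$ (here one uses that the comparison maps in diagram (\ref{diag:frac_butterfly}) are natural in the butterfly). Second, functoriality: given composable butterflies, I would invoke the span-composition description of Proposition~\ref{prop:composition}, where the composite span is obtained by a pullback followed by the comprehensive factorization. By Remark~\ref{rem:we_pb}, the relevant legs $\underline s\cdot\underline u$, $\underline s''$ and $\underline q$ are all weak equivalences, so $R$ sends them to isomorphisms; then a short diagram chase with the pullback square and the factorization $\langle\underline s'',\underline t''\rangle\cdot\underline q$ shows $\overline R([\widehat E']\cdot[\widehat E])=\overline R[\widehat E']\cdot\overline R[\widehat E]$. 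Preservation of identities is immediate from $\mathcal B$ being the identity on objects and $Q$ sending identity morphisms to identity butterflies.

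Finally I would record that $Q$ is the identity on objects and check $\overline R\cdot Q = R$ on morphisms: for a morphism $\underline f$ of crossed extensions, $Q(\underline f)=[\mathcal B(U(\underline f))]$ is represented by a \emph{representable} butterfly, whose span has a split — hence identity-reducible — left leg, so the span may be taken to be $(\mathrm{id},\underline f)$ up to the natural isomorphism of spans, giving $\overline R(Q(\underline f))=R(\underline f)\cdot R(\mathrm{id})^{-1}=R(\underline f)$. This completes the verification of the universal property, and hence the proof.
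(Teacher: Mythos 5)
Your proposal follows the paper's proof essentially verbatim: $Q$ inverts weak equivalences because they are sent to flippable butterflies, and the universal extension is defined on a butterfly via its span representation, with well-definedness, functoriality (via the pullback-plus-comprehensive-factorization composition and the fact that the relevant legs are weak equivalences), and $\overline R\cdot Q=R$ checked exactly as in the paper. The one spot where the paper is slightly more careful is the final step: the span of a representable butterfly is not literally isomorphic \emph{as a span} to $(\mathrm{id},\underline f)$ --- its left leg $\underline s$ is a split epimorphic weak equivalence with a section $\underline s'$ satisfying $\underline t\cdot\underline s'=\underline f$ --- so one concludes via $R(\underline s)^{-1}=R(\underline s')$ and $R(\underline t)\cdot R(\underline s')=R(\underline f)$ rather than by replacing the span outright.
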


\begin{proof}
Since the butterfly associated with a weak equivalence is an internal equivalence in $\underline{\BExt}(\Gp)$, it is clear that $Q$ sends weak equivalences to isomorphisms. Moreover, it is universal with respect to this property. 

In order to prove it, let us consider a functor $F\colon \XExt(\Gp)\to \mathsf X$ that turns weak equivalences into isomorphisms.
$$
\xymatrix{
\XExt(\Gp)\ar[r]^Q\ar[dr]_{F}
&[\BExt](\Gp)\ar@{-->}[d]^{\widetilde F}
\\
&\cX}
$$
There is just one way to define the functor $\widetilde F$ which extends $F$. Since $Q$ is constant on objects, $\widetilde F$ behaves like $F$ on objects. As for morphisms, let us consider  $[\widehat E]\colon \partial_H\to \partial_G$, together with a span representation $\widehat E=(\underline s,\underline t)$, and define 
$$
\widetilde F([\widehat E])=F(\underline t)\cdot F(\underline s)^{-1}\,.
$$
It is well-defined, since isomorphic butterflies have isomorphic span representations. For two composable butterflies $\widehat E$ and $\widehat E'$ as in diagram (\ref{diag:composition}), we have:
$$
\widetilde F([\widehat E'])\cdot\widetilde F([\widehat E])=F(\underline t')\cdot F(\underline s')^{-1}\cdot F(\underline t)\cdot F(\underline s)^{-1}=F(\underline t')\cdot F(\underline v)\cdot F(\underline u)^{-1}\cdot F(\underline s)^{-1}
$$
$$
=F(\underline t'\cdot \underline v)\cdot F(\underline s\cdot \underline u)^{-1}
=F(\underline t''\cdot \underline q)\cdot F(\underline s''\cdot \underline q)^{-1}=F(\underline t'')\cdot F( \underline q)\cdot F(\underline q)^{-1}\cdot F(\underline s'')^{-1}
$$
$$
=F(\underline t'')\cdot F(\underline s'')^{-1} = \widetilde F([\widehat E'\cdot\widehat E])= \widetilde F([\widehat E']\cdot[\widehat E])
$$
where the second equality holds because $\underline t\cdot\underline u=\underline s'\cdot \underline v$, and $\underline u$ is a weak equivalence,  the fifth one holds because the final morphism $\underline q$ is in fact a weak equivalence as well (see Remark \ref{rem:we_pb}). From preservation of composition, in this case one gets the  preservation of identities, and this shows that $\widetilde F$ is a functor.
Finally, given a morphism $\underline f\colon \partial_H\to \partial_G$, we are to prove that $\widetilde F(Q(\underline f))= F(\underline f)$. To this end, recall that the morphism $\underline f$ gives rise to a butterfly $\mathcal B(U(\underline f))$ whose representing span has  a \emph{split epimorphic} internal equivalence as left leg:
$$
\xymatrix{
\partial_H
&\partial_f\ar[l]_{\underline s}\ar[r]^{\underline t}
&\partial_G}\,.
$$
Recall from \cite[Remark 5.7]{AMMV13} that, given a section $\underline s'$ of $\underline s$,  $\underline t\cdot \underline s'$ is isomorphic to $\underline f$. In fact, one can choose the section $\underline s'$ in such a way that the isomorphism is an equality. Moreover, since $\underline s$ is a weak equivalence, $F(\underline s)$ is an isomorphism, and $F(\underline s)^{-1}=F(\underline s').$ Then:
$$
\widetilde F(Q(\underline f))=
\widetilde F([\mathcal B(U(\underline f))])=
F(\underline t)\cdot F(\underline s)^{-1}=
F(\underline t)\cdot F(\underline s')=
F(\underline t \cdot \underline s')=F(\underline f)\,.$$
\end{proof}

\begin{Remark}
Another point of view in dealing with the category of fractions of crossed modules with respect to weak equivalences is adopted in \cite{No07}. In particular, see Proposition 9.9 and Proposition 9.12 therein.
\end{Remark}

From Proposition \ref{prop:factorization}, one immediately obtains the following statement.

\begin{Theorem} \label{cor:Butter_fac}
The fibrewise opfibration $(\Pi,\Pi_0,(\ )_0)$ admits a factorization $\Pi=P\cdot Q$ such that the resulting fibrewise opfibration $(P,P_0,(\ )_0)$ has groupoidal fibres.
$$
\xymatrix@C=6ex{
\XExt(\Gp)\ar[dr]_{{\Pi}_0}\ar[r]_{Q}\ar@/^3ex/[rr]^{\Pi}
&[\BExt](\Gp)\ar[d]_{P_0}\ar[r]_P
&\Mod(\Gp)\ar[dl]^{(\ )_0}
\\
&\Gp
}
$$
\end{Theorem}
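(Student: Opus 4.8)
The plan is to obtain the statement as a direct instance of Proposition~\ref{prop:factorization}, applied to the fibrewise opfibration $(\Pi,\Pi_0,(\ )_0)$ of diagram~\eqref{diag:xext_triang} (which plays the role of the functor $P$ in that proposition), with base $\cB=\Gp$. All that needs doing is to identify the abstract category of fractions produced there with the concrete category $[\BExt](\Gp)$ together with the localization functor $Q$ of Section~\ref{sec:bfly_gp}.

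First I would check the hypothesis of Proposition~\ref{prop:factorization}, namely that the category of fractions of $\XExt(\Gp)$ with respect to the class of $\Pi$-vertical arrows is locally small. By Remark~\ref{rem:invertee_identee}, since $\Pi$ is an isofibration in $\Cat/\Gp$, this category of fractions coincides with the one obtained by formally inverting \emph{all} the arrows that $\Pi$ sends to isomorphisms; and, as recalled at the beginning of Section~\ref{sec:XExt}, these are exactly the weak equivalences of crossed extensions. By Proposition~\ref{prop:fractions}, the functor $Q\colon\XExt(\Gp)\to[\BExt](\Gp)$ presents $[\BExt](\Gp)$ as precisely that category of fractions. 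Local smallness is then clear: a hom-set of $[\BExt](\Gp)$ is a set of isomorphism classes of butterflies between two fixed crossed extensions, and for fixed $\partial_H$ and $\partial_G$ the butterflies of diagram~\eqref{diag:butterfly} form a set, since the middle group $E$ sits in a short exact sequence $0\to G_2\to E\to H_1\to 0$ and hence has bounded cardinality, while the structure maps $\kappa,\iota,\delta,\gamma$ and the group operation on $E$ then range over a set.

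With the hypothesis in place, Proposition~\ref{prop:factorization} yields a universal factorization of $\Pi$ in $\Fib(\Gp)$ through a fibrewise opfibration with groupoidal fibres, whose first leg is the localization at the $\Pi$-vertical arrows. Since a category of fractions is determined up to equivalence by its universal property and Proposition~\ref{prop:fractions} supplies the explicit model $([\BExt](\Gp),Q)$, I would take this as the middle object: write $P$ for the resulting second leg $[\BExt](\Gp)\to\Mod(\Gp)$ (the functor denoted $S$ in Proposition~\ref{prop:factorization}) and $P_0$ for the structure functor $[\BExt](\Gp)\to\Gp$, characterised over $\Gp$ by $P_0\cdot Q=\Pi_0$ and $(\ )_0\cdot P=P_0$. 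This is exactly the triangle of the statement, with $\Pi=P\cdot Q$ in $\Fib(\Gp)$ and $(P,P_0,(\ )_0)$ a fibrewise opfibration whose fibres are groupoids. The only step that is not purely formal is this last transport --- checking that the factorization abstractly produced by Proposition~\ref{prop:factorization} is indeed the one through $([\BExt](\Gp),Q)$ and that the $P$ so obtained restricts over each object of $\Gp$ to an opfibration with groupoidal fibres --- but it is forced by the universal property of the localization together with the general fact, recorded in the discussion preceding Proposition~\ref{prop:factorization} and in Proposition~4.8 of~\cite{refl_fib}, that localizing a fibrewise opfibration at its vertical arrows turns its fibres into groupoids. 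No computation is required beyond this.
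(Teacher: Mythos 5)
Your proof is correct and takes essentially the same route as the paper, which deduces the theorem immediately from Proposition~\ref{prop:factorization} after using Remark~\ref{rem:invertee_identee} to identify the localization at $\Pi$-vertical arrows with the localization at weak equivalences and Proposition~\ref{prop:fractions} to recognise $([\BExt](\Gp),Q)$ as that category of fractions. Your explicit cardinality argument for local smallness is a minor addition that the paper leaves implicit in the concrete construction of $[\BExt](\Gp)$.
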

One can find below a diagram describing how to obtain $P([\widehat E])=(\varphi_0,\varphi)$.
\begin{equation} \label{diag:span_biext}
\begin{aligned}
\xymatrix@!C=4ex{
	B \ar[dd]_j \ar@{=}[r] & B \ar@{-->}[d]_{\langle j,j'\varphi \rangle} \ar@{-->}[r]^\varphi & B' \ar[dd]^{j'} \\
	& H_2\times G_2 \ar@{-->}[dl]_{p_1} \ar@{-->}[dr]^{p_2} \ar@{-->}[dd]^{\kappa \sharp \iota} \\
	H_2 \ar[rd]^{\kappa} \ar[dd]_{\partial_{H}} & & G_2 \ar[dd]^{\partial_{G}} \ar[ld]_{\iota} \\
	& E \ar[ld]^{\delta} \ar[rd]_{\gamma} \ar@{-->}[dd]_{p\delta} \\
	H_1 \ar[d]_p & & G_1 \ar[d]^{p'} \\
	C \ar@{=}[r] & C \ar@{-->}[r]_{\varphi_0} & C' 
}
\end{aligned}
\end{equation}

We can now apply Theorem \ref{thm:structure} to our case study.

\begin{Theorem} \label{thm:XExt}
Given two crossed extensions $X=(p,\partial,j)$ and $X'=(p',\partial',j')$, and a homomorphism of modules $\uffa=(\varphi_0,\varphi)\colon \Pi(X)\to \Pi(X')$, then
\begin{itemize}
\item[i)] the set (of the isomorphism classes) of butterflies
$$[\widehat E]\colon X \to X'$$
with $P[\widehat E]=\uffa$ is not empty if and only if  $\uffa^*X'$ is isomorphic to $\uffa_*X$ in $[\BExt](\Gp)$;
\item[ii)] if this is the case, such a set is a simply transitive $\Gamma$-$\Set$, where  $$
 \Gamma= [\BExt](\Gp)_{\Pi(\uffa^*X')}(\uffa^*X',\uffa^*X')
$$
is the group of the automorphisms of the object $\uffa^*X'$ in the fibre of $[\BExt](\Gp)$ over $\Pi(\uffa^*X')$.
\end{itemize}
\end{Theorem}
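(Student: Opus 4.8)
The plan is to apply Theorem~\ref{thm:structure} directly to the fibrewise opfibration $(P,P_0,(\ )_0)$ produced in Theorem~\ref{cor:Butter_fac}. That theorem guarantees precisely the two ingredients we need: $P$ is a fibrewise opfibration in $\Fib(\Gp)$, and its fibres are groupoids. So the two statements (i) and (ii) are nothing but the specialization of parts (i) and (ii) of Theorem~\ref{thm:structure} to this concrete situation, once we have identified the various objects appearing there.

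First I would recall that, since $Q\colon\XExt(\Gp)\to[\BExt](\Gp)$ is the identity on objects and $\Pi=P\cdot Q$, we have $P(X)=\Pi(X)$ and $P(X')=\Pi(X')$, so the module morphism $\uffa=(\varphi_0,\varphi)\colon\Pi(X)\to\Pi(X')$ is exactly a morphism $\uffa\colon P(X)\to P(X')$ in $\Mod(\Gp)$, which is the datum required to formulate the obstruction problem of Definition~\ref{def:obstruction_problem} for the triple $(X,X',\uffa)$. Next I would unwind the notation of Theorem~\ref{thm:structure}: the object ``$\varphi^*y$'' there is obtained by first taking the $(\ )_0$-cartesian lifting of $(\ )_0(\uffa)=\varphi_0$ at $X'$ inside $\Mod(\Gp)$ — which gives the pulled-back module with underlying group $C$ — and then the $P_0$-cartesian lifting at $X'$, which by the construction recalled before Proposition~\ref{prop:fractions} (the cartesian liftings of $\Pi_0$, which are also cartesian for $P_0$) is the crossed extension $\uffa^*X'$; and the object ``$\varphi_*x$'' is the opcartesian lifting of the vertical part $\varphi_v$ of $\uffa$ at $X$ in the fibre over $C$, which is $\uffa_*X$. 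With these identifications, $\Phi$ of Theorem~\ref{thm:obstruction} is a bijection between $[\BExt](\Gp)_{\Pi(\uffa^*X')}(\uffa_*X,\uffa^*X')$ and $[\BExt](\Gp)_{\uffa}(X,X')$, the latter being exactly the set of isomorphism classes of butterflies $[\widehat E]\colon X\to X'$ with $P[\widehat E]=\uffa$.

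With the dictionary in place, (i) is immediate: by Theorem~\ref{thm:structure}(i), $[\BExt](\Gp)_{\uffa}(X,X')\neq\emptyset$ iff $\uffa^*X'\cong\uffa_*X$ in the $P$-fibre over $\Pi(\uffa^*X')$; and since $P$ has groupoidal fibres and the $P$-fibre sits inside $[\BExt](\Gp)$, an isomorphism in the fibre is in particular an isomorphism in $[\BExt](\Gp)$ (and conversely, an isomorphism of butterflies lying over the identity module morphism is exactly a fibre isomorphism), giving the stated condition. For (ii), I would invoke Theorem~\ref{thm:structure}(ii): when the set is non-empty it carries a simply transitive action of the group $H=\cX_{P(\varphi^*y)}(\varphi^*y,\varphi^*y)$, which in our notation is $[\BExt](\Gp)_{\Pi(\uffa^*X')}(\uffa^*X',\uffa^*X')=\Gamma$, the automorphism group of $\uffa^*X'$ in the fibre of $[\BExt](\Gp)$ over $\Pi(\uffa^*X')$; the action is transported along $\Phi$ from the composition action of $\Gamma$ on $[\BExt](\Gp)_{\Pi(\uffa^*X')}(\uffa_*X,\uffa^*X')$.

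The only real work, and the place where I expect to spend most of the effort, is the identification step: verifying carefully that the abstractly-constructed objects $\varphi^*y$ and $\varphi_*x$ of Theorems~\ref{thm:obstruction}--\ref{thm:structure} coincide (up to canonical isomorphism in the relevant fibres, which is all that is needed) with the concretely-described $\uffa^*X'$ and $\uffa_*X$ of diagram~\eqref{diag:span_biext} and the explicit pullback/push-forward constructions of Section~\ref{subsec:Xmod}. This is a matter of checking that the cartesian liftings of $\Pi_0$ described before Proposition~\ref{prop:fractions} remain cartesian for $P_0$ (they do, since $Q$ is a functor over $\Gp$ which is bijective on objects, and cartesian liftings are determined up to iso, or more directly since $P_0\cdot Q=\Pi_0$ and $Q$ inverts the relevant comparisons) and that the opcartesian liftings in the fibres of $\Pi$ pass to opcartesian liftings in the groupoidal fibres of $P$ under the localization $Q$, which is exactly what Proposition~\ref{prop:factorization} delivers. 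Everything else is a translation of Theorem~\ref{thm:structure} into the present notation.
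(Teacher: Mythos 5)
Your proposal is correct and follows exactly the paper's route: the paper likewise obtains Theorem \ref{thm:XExt} by applying Theorem \ref{thm:structure} to the groupoidal-fibre fibrewise opfibration $(P,P_0,(\ )_0)$ of Theorem \ref{cor:Butter_fac}, identifying the abstract cartesian and opcartesian liftings with the concrete pullback $\uffa^*X'$ and push-forward $\uffa_*X$ (this is precisely the content of diagram \eqref{diag:the_example} and the discussion surrounding it). The one place where you are, if anything, slightly more careful than the paper is the reconciliation in part (i) of ``isomorphic in the $P$-fibre'' (which is what Theorem \ref{thm:structure} literally gives) with ``isomorphic in $[\BExt](\Gp)$'' (which is what the statement asserts); the paper glosses over this in the same way, noting only that a morphism in a groupoidal fibre is automatically an isomorphism.
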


The diagram below may help to understand Theorem \ref{thm:XExt}.
\begin{equation}\label{diag:the_example}
\begin{aligned}
\xymatrix@!C=8ex{
	B \ar[d] \ar[r]_{\varphi}  \ar@<+3ex>[rrrr]^{\varphi} & B' \ar[d]_{} \ar[rr]_-1
		& & B' \ar[d]_{} \ar[r]_-{1} & B' \ar[d]^{} 
\\
	H_2\ar[r]\ar[dd]_{\partial}  & H_2'\ar@{-->}[dr]\ar[dd]_{\uffa_*\partial}
		& & G_2\ar@{-->}[dl] \ar[dd]^{\uffa^*\partial'}\ar[r]^{1} & G_2\ar[dd]^{\partial'} 
\\
	&  & ? \ar@{-->}[dl] \ar@{-->}[dr] 
\\
	H_1 \ar[r]_-{1} \ar[d]_{}  & H_1 \ar[d]_{} & & G_1' \ar[d] \ar[r]_{}
		& G_1 \ar[d]
\\
	C \ar[r]^1  \ar@<-3ex>[rrrr]_{\varphi_0} & C \ar[rr]^1 & & C \ar[r]^-{\varphi_0} & C'
}
\end{aligned}
\end{equation}
First, since $(\ )_0$ is a fibration, we consider a cartesian lifting of $\varphi_0$ at $X'$. This is done via the pullback represented by the right lower square of the diagram. Notice that, at this point, since composition with a cartesian map induces an isomorphism between the homsets under consideration, we have reduced our problem to finding a weak map, represented by a butterfly, between $\uffa_*X$ and $\uffa^*X'$.

Now we can consider the restriction of the fibrewise opfibration $\Pi$ to the fibres over $C$, which is then an opfibration. 
The opcartesian lifting of $\varphi$ at $X$ is obtained via the push forward in the left upper square in the diagram. Moreover, since also composition with an opcartesian map induces isomorphisms between the homsets under consideration, we have reduced further our problem to finding a weak map between $\uffa_*X$ and $\uffa^*X'$. 
This is represented by the dashed butterfly in the diagram, that, when it exists, yields an isomorphism in $[\BExt](\Gp)$, since it lays in a $P$-fibre. This explains point $i)$ in Theorem \ref{thm:XExt}, which has also a cohomological interpretation. If $X$ determines an element $[\epsilon]$ of $\mathcal H^3(C,B)$ represented by a cocycle $\epsilon\colon C\times C\times C \to B$, then $\uffa_*X$ corresponds to the element $[\varphi\cdot \epsilon]$ of $\mathcal H^3(C,B')$. On the other side, if $X'$ determines an element $[\epsilon']$ of $\mathcal H^3(C',B')$, then $\uffa^*X'$ corresponds to the element $[\epsilon'\cdot(\varphi_0\times\varphi_0\times\varphi_0)]$ of $\mathcal H^3(C,B')$. 

As far as point $ii)$ of Theorem \ref{thm:XExt} is concerned, $\Gamma$ acts by composition on the left. 
One can prove that the group $\Gamma$ is canonically isomorphic to the cohomology group $\mathcal H^2(C,B',\varphi_0^*\xi')$, where the $C$-module structure  $\varphi_0^*\xi'$ on $B'$ is given by pulling back along $\varphi_0$ the action $\xi'$ of $C'$ on $B'$. The argument of the proof follows the same lines as in the lower-dimensional case of $\OPEXT$ mentioned before, see Remark \ref{rem:cat_group}. Also in this case,  all the automorphism groups of the crossed extensions lying in the same fibre turn out to be isomorphic to each other. Therefore,  $\Gamma$  is isomorphic in particular to the automorphism group of the  $0$ crossed extension:
$$
\xymatrix{
B' \ar[r]^{1}
& B' \ar[r]^{0}
& C \ar[r]^{1}
& C
}
$$
and the latter is isomorphic to $\mathcal H^2(C,B',\varphi_0^*\xi')$, as proved in \cite{CM16} in the semi-abelian case. In conclusion, we can rephrase Theorem \ref{thm:XExt} as follows:

\begin{Theorem} \label{thm:structure_Gp}
Consider two crossed extensions $X$ and $X'$, with associated elements $[\epsilon]$ in $\mathcal H^3(C,B)$ and $[\epsilon']$ in $\mathcal H^3(C',B')$ respectively, and a morphism $\uffa=(\varphi_0,\varphi)\colon (C,B)\to (C',B')$ of modules. Then
\begin{itemize}
 \item[$i)$] there exists a butterfly $\widehat{E}\colon\xymatrix@!C=3ex{X \ar[r]|-@{|} & X'}$ with $P([\widehat{E}])=\uffa$ if and only if $[\varphi\cdot \epsilon]=[\epsilon'\cdot(\varphi_0\times\varphi_0\times \varphi_0)]$;
 \item[$ii)$] if $[\BExt]_{\uffa}(X,X')\neq \emptyset$, it is a simply transitive $\mathcal H^2(C,B',\varphi_0^*\xi')$-set.
\end{itemize}
\end{Theorem}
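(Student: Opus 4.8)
The plan is to derive Theorem \ref{thm:structure_Gp} from Theorem \ref{thm:XExt} by translating each of its two clauses into cohomological language, using the known interpretation of the fibres of $\Pi$ (resp.\ of $P$) over a module $(C,B,\xi)$ in terms of $\mathcal H^3$ (resp.\ the classifying $\mathcal H^2$-torsor). The two clauses are handled separately and essentially independently.

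For clause $i)$, I would start from the equivalence established in Theorem \ref{thm:XExt}$i)$: a butterfly $\widehat E\colon X\to X'$ with $P[\widehat E]=\uffa$ exists if and only if $\uffa^*X'\cong\uffa_*X$ in $[\BExt](\Gp)$, that is, in the fibre of $P$ over $\Pi(\uffa^*X')=(C,B',\varphi_0^*\xi')$. Since that fibre is a groupoid whose set of connected components is $\mathcal H^3(C,B',\varphi_0^*\xi')$, the isomorphism holds exactly when $\uffa^*X'$ and $\uffa_*X$ determine the same class in $\mathcal H^3(C,B',\varphi_0^*\xi')$. It then remains to identify those two classes with $[\epsilon'\cdot(\varphi_0\times\varphi_0\times\varphi_0)]$ and $[\varphi\cdot\epsilon]$ respectively; this is exactly the content of the two ``cohomological interpretation'' paragraphs preceding the statement, namely that pushing forward along $\varphi$ post-composes a $3$-cocycle with $\varphi$, and pulling back along $\varphi_0$ pre-composes with $\varphi_0\times\varphi_0\times\varphi_0$. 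Collecting these identifications yields $i)$.

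For clause $ii)$, assuming $[\BExt]_{\uffa}(X,X')\neq\emptyset$, Theorem \ref{thm:XExt}$ii)$ already gives that this set is a simply transitive $\Gamma$-set with $\Gamma=[\BExt](\Gp)_{\Pi(\uffa^*X')}(\uffa^*X',\uffa^*X')$, the automorphism group of $\uffa^*X'$ in the $P$-fibre over $(C,B',\varphi_0^*\xi')$. The task is therefore purely to exhibit a group isomorphism $\Gamma\cong\mathcal H^2(C,B',\varphi_0^*\xi')$. Following Remark \ref{rem:cat_group}, I would invoke the symmetric monoidal (categorical group) structure on that fibre: the Baer sum of crossed extensions makes the groupoid $[\BExt](\Gp)(C,B',\varphi_0^*\xi')$ a categorical group, and by basic categorical group theory the automorphism group of any object is canonically isomorphic to that of the unit object. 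The unit object is the split ($0$) crossed extension $B'\xrightarrow{1}B'\xrightarrow{0}C\xrightarrow{1}C$, and its automorphism group is identified with $\mathcal H^2(C,B',\varphi_0^*\xi')$; this last identification is the one proved in \cite{CM16} in the semi-abelian case, so I would simply cite it. Transporting the torsor structure along the chain $\Gamma\cong\Aut(\text{unit})\cong\mathcal H^2(C,B',\varphi_0^*\xi')$ gives $ii)$.

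The main obstacle is the bookkeeping in clause $ii)$: one must be sure that the monoidal structure on the $P$-fibre over $(C,B',\varphi_0^*\xi')$ is genuinely that of a categorical group (every object invertible up to isomorphism, with the $0$ extension as unit), so that the ``automorphisms of any object $\cong$ automorphisms of the unit'' principle applies, and that the resulting isomorphism is compatible with the torsor action coming from Theorem \ref{thm:XExt}. Since all the ingredients—the Baer sum, the categorical group structure, the $\pi_1\cong\mathcal H^2$ identification—are available from \cite{CM16} and the analogous low-dimensional discussion in Remark \ref{rem:cat_group}, this reduces to assembling citations rather than a new computation; the cohomological identifications in clause $i)$ are likewise routine once the push-forward/pull-back formulas for cocycles are in hand.
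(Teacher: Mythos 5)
Your proposal is correct and follows essentially the same route as the paper: the authors likewise deduce Theorem \ref{thm:structure_Gp} directly from Theorem \ref{thm:XExt}, identifying the classes of $\uffa_*X$ and $\uffa^*X'$ with $[\varphi\cdot\epsilon]$ and $[\epsilon'\cdot(\varphi_0\times\varphi_0\times\varphi_0)]$ in $\mathcal H^3(C,B',\varphi_0^*\xi')$ for point $i)$, and identifying $\Gamma$ with $\mathcal H^2(C,B',\varphi_0^*\xi')$ via the categorical-group argument (automorphisms of any object $\cong$ automorphisms of the $0$ crossed extension $\cong\mathcal H^2$, citing \cite{CM16}) for point $ii)$. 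No gaps beyond those the paper itself delegates to the cited references.
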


\smallskip
Specializations of the example above give rise to well-known classification problems. For instance, let us be given two groups $K$ and $C$, and  a morphism $\psi_0\colon C \to \Out(K)$, called \emph{abstract kernel}. The obstruction problem associated with these data precisely corresponds to the classical Schreier-Mac Lane obstruction theory of non-abelian extensions, which means to find extensions of $C$ via $K$ inducing $\psi_0$. Actually, such an extension corresponds to filling the diagram below with a butterfly:
\begin{equation} \label{diag:olo}
\begin{aligned}
\xymatrix@!C=6ex{
	0 \ar[d] \ar[rr] & &  \text{Z}(K) \ar[d] \\
	0 \ar@{-->}[rd] \ar[dd] & & K \ar[dd]^{\mathcal I_K} \ar@{-->}[ld]_{k} \\
	& E \ar@{-->}[ld]_{f} \ar@{-->}[rd]|-{\textsf{conj}} \\
	C \ar[d]_1 & & \Aut(K) \ar[d] \\
	C \ar[rr]_-{\psi_0} & & \Out(K)
}
\end{aligned}
\end{equation}
where $\mathcal I_K$ is the classical crossed module of inner automorphisms of $K$ into $\Aut(K)$, its kernel being the centre $ \text{Z}(K)$ and its cokernel being the group of \emph{outer} automorphisms of $K$; the homomorphism $\mathsf{conj}$ is the restriction to $K$ of conjugation in $E$. In conclusion, applying Theorem \ref{thm:structure_Gp} to this particular case, we recover the following classical result (more details can be found in \cite{CM16}; see also \cite{CMM13} and \cite{Bourn08}).

\begin{Theorem}[Schreier-Mac Lane Classification Theorem] \label{thm:SML}
Let $\zeta_K$ denote the canonical action of $\mathrm{Out}(K)$ on $\mathrm{Z}(K)$ and $[\omega_K]$ the element of $\mathcal{H}^3(\mathrm{Out(K)},\mathrm{Z}(K),\zeta_K)$ corresponding to $\mathcal{I}_K$. Let $\psi_0\colon C \to \mathrm{Out}(K)$ be an abstract kernel and  $\mathrm{Ext}(C,K,\psi_0)$ the set of equivalence classes of extensions inducing the abstract kernel $\psi_0$. Then
\begin{itemize}
 \item[$i)$] $\mathrm{Ext}(C,K,\psi_0)\neq\emptyset$ if and only if $[\omega_K\cdot(\psi_0\times\psi_0\times \psi_0)]=0$;
 \item[$ii)$] if $\mathrm{Ext}(C,K,\psi_0)\neq\emptyset$, it is a simply transitive $\mathcal H^2(C,\mathrm{Z}(K),\psi_0^*\zeta_K)$-set.
\end{itemize}
\end{Theorem}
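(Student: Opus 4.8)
The plan is to derive the Schreier--Mac Lane Classification Theorem as a direct specialization of Theorem~\ref{thm:structure_Gp}. The key observation is that the data of an abstract kernel $\psi_0\colon C\to\Out(K)$ determines a morphism of modules between the module $(C,0,\xi)$ with trivial coefficient part associated to the crossed extension $0\to 0\to 0\to C\to C\to 1$ (equivalently, to the crossed module $0\to C$) and the module $(\Out(K),\mathrm{Z}(K),\zeta_K)$ associated to the crossed extension determined by $\mathcal I_K$, namely $0\to\mathrm Z(K)\to K\to\Aut(K)\to\Out(K)\to 1$. Here the first module has trivial abelian part $B=0$, so the pair $\uffa=(\psi_0,\varphi)$ with $\varphi$ the unique map $0\to\mathrm Z(K)$ is indeed a morphism of modules $(C,0)\to(\Out(K),\mathrm Z(K))$.

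First I would recall, as is by now standard (see the discussion around diagram~\eqref{diag:olo} and the references \cite{CM16,CMM13}), that extensions of $C$ by $K$ inducing the abstract kernel $\psi_0$ correspond bijectively, up to the appropriate equivalence, to butterflies $\widehat E$ from the crossed module $0\to C$ to the crossed module $\mathcal I_K\colon K\to\Aut(K)$ whose image under $P$ is exactly $\uffa=(\psi_0,\varphi)$; equivalently, butterflies between the associated crossed extensions filling diagram~\eqref{diag:olo}. This identification of $\mathrm{Ext}(C,K,\psi_0)$ with $[\BExt]_{\uffa}(X,X')$, where $X$ is the crossed extension of $0\to C$ and $X'$ is the crossed extension of $\mathcal I_K$, is the crucial translation step; it is where the content of the classical correspondence between non-abelian extensions and butterflies is used.

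Next I would simply invoke Theorem~\ref{thm:structure_Gp} for this choice of $X$, $X'$ and $\uffa$. Since the abelian part of $X$ is $B=0$, the cocycle $\epsilon$ representing $[\epsilon]\in\mathcal H^3(C,0)$ is necessarily zero, so $[\varphi\cdot\epsilon]=0$ in $\mathcal H^3(C,\mathrm Z(K))$. On the other side, $X'$ corresponds to the element $[\omega_K]\in\mathcal H^3(\Out(K),\mathrm Z(K),\zeta_K)$, so $\uffa^*X'$ corresponds to $[\omega_K\cdot(\psi_0\times\psi_0\times\psi_0)]\in\mathcal H^3(C,\mathrm Z(K),\psi_0^*\zeta_K)$. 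Thus point $i)$ of Theorem~\ref{thm:structure_Gp} reads: a butterfly exists if and only if $0=[\omega_K\cdot(\psi_0\times\psi_0\times\psi_0)]$, which is exactly statement $i)$. For point $ii)$, Theorem~\ref{thm:structure_Gp} gives that the set, when non-empty, is a simply transitive $\mathcal H^2(C,\mathrm Z(K),\psi_0^*\zeta_K)$-set, since here $B'=\mathrm Z(K)$, $\varphi_0=\psi_0$ and $\xi'=\zeta_K$; this is statement $ii)$.

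The main obstacle is not the cohomological bookkeeping, which is routine once the dictionary is in place, but rather the careful verification that the bijection between $\mathrm{Ext}(C,K,\psi_0)$ and $[\BExt]_{\uffa}(X,X')$ is correct: one must check that the equivalence relation on extensions matches isomorphism of butterflies, and that the abstract kernel induced by an extension coincides with the module morphism $\uffa$ read off from the corresponding butterfly via $P$. This is precisely the content recalled in \cite{CM16} (and, in the group case, already implicit in \cite{Bourn08,CMM13}), so in the write-up I would state it as a lemma with a reference rather than reprove it, and then the theorem follows formally.
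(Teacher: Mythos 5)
Your proposal is correct and follows essentially the same route as the paper: specialize Theorem \ref{thm:structure_Gp} to the crossed extensions associated with $0\to C$ and with $\mathcal I_K$, using the correspondence (recalled from \cite{CM16}) between extensions inducing $\psi_0$ and butterflies filling diagram \eqref{diag:olo}, and then observe that the class $[\epsilon]$ of the source vanishes since its coefficient module is trivial. The paper likewise leaves the identification of $\mathrm{Ext}(C,K,\psi_0)$ with the relevant hom-set of butterflies to the cited references, so your decision to state it as a lemma with a reference matches the paper's level of detail.
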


\subsection{Crossed extensions in the semi-abelian context} \label{sec:xext_semiab}

As already mentioned, most of the concepts involved in Section 6 carry on in any semi-abelian category satisfying \SH\ (see \cite{CM16,AMMV13,CMMV_Yoneda}). First of all, since the proof of Proposition \ref{prop:fractions} is purely formal, it can be performed in any such category, hence providing the following result, which includes also the intrinsic version of Theorem \ref{cor:Butter_fac}.

\begin{Theorem} \label{thm:structure_semiab}
Let \cC\ be a semi-abelian category satisfying \SH. The functor $Q\colon \XExt(\cC)\to [\BExt](\cC)$ provides a category of fractions of $\XExt(\cC)$ with respect to weak equivalences.
As a consequence, the fibrewise opfibration $(\Pi,\Pi_0,(\ )_0)$ admits a factorization $\Pi=P\cdot Q$ such that the resulting fibrewise opfibration $(P,P_0,(\ )_0)$ has groupoidal fibres.
$$
\xymatrix@C=6ex{
\XExt(\cC)\ar[dr]_{{\Pi}_0}\ar[r]_{Q}\ar@/^3ex/[rr]^{\Pi}
&[\BExt](\cC)\ar[d]_{P_0}\ar[r]_P
&\Mod(\cC)\ar[dl]^{(\ )_0}
\\
&\cC
}
$$
\end{Theorem}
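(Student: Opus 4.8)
The plan is to reduce Theorem~\ref{thm:structure_semiab} to the group case, Theorem~\ref{cor:Butter_fac}, by checking that every ingredient used there is available in a semi-abelian category \cC\ satisfying \SH. The core of the statement is the first claim, that $Q\colon\XExt(\cC)\to[\BExt](\cC)$ exhibits $[\BExt](\cC)$ as the category of fractions of $\XExt(\cC)$ with respect to weak equivalences; once this is established, the second claim follows verbatim from Proposition~\ref{prop:factorization}, exactly as in the derivation of Theorem~\ref{cor:Butter_fac}. So the real work is the fractions statement.

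First I would recall the inputs that Proposition~\ref{prop:fractions} depended on and note that each has an established semi-abelian counterpart under \SH. Namely: (a) the bicategory $\underline{\Bfly}(\cC)$ of crossed modules and butterflies, together with the homomorphism $\mathcal B\colon\underline{\XMod}(\cC)\to\underline{\Bfly}(\cC)$, exists in any semi-abelian category satisfying \SH\ (this is \cite{AMMV13}); (b) the span representation of a butterfly via the crossed module $\kappa\sharp\iota$, with $(\delta,p_1)$ a weak equivalence, and the fact that it is pullback-stable among surjective-on-objects weak equivalences, is intrinsic (\cite{AMMV13}, \cite{M19}); (c) the comprehensive factorization of Proposition~\ref{prop:comprehensive} holds in the semi-abelian setting (\cite[Section~3]{CM16}); (d) a butterfly representing a weak equivalence is flippable, hence an internal equivalence in $\underline{\Bfly}(\cC)$, and $\mathcal B$ sends weak equivalences to internal equivalences (\cite{CM16}, \cite{AMMV13}); (e) for a section $\underline s'$ of the left leg $\underline s$ of the span of $\mathcal B(U(\underline f))$, one has $\underline t\cdot\underline s'\cong\underline f$, and the section can be chosen to make this an equality (\cite[Remark~5.7]{AMMV13}).

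With these in hand, the proof of Proposition~\ref{prop:fractions} is purely formal and transcribes unchanged: $Q$ is the identity on objects and sends $\underline f$ to $[\mathcal B(U(\underline f))]$; it inverts weak equivalences because their associated butterflies are internal equivalences in $\underline{\BExt}(\cC)$; and given any $F\colon\XExt(\cC)\to\cX$ inverting weak equivalences, one defines $\widetilde F([\widehat E])=F(\underline t)\cdot F(\underline s)^{-1}$ on a span representation, checks well-definedness using that isomorphic butterflies have isomorphic spans, checks functoriality using the pullback-plus-comprehensive-factorization composition of diagram~\eqref{diag:composition} together with Remark~\ref{rem:we_pb} (the final comparison $\underline q$ is itself a weak equivalence), and checks $\widetilde F\cdot Q=F$ using (e). Uniqueness of $\widetilde F$ is immediate since $Q$ is constant on objects and surjective on morphisms up to the relation generated by weak equivalences. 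Finally, applying Proposition~\ref{prop:factorization} to the fibrewise opfibration $(\Pi,\Pi_0,(\ )_0)$ — whose vertical arrows are precisely the weak equivalences — and invoking Remark~\ref{rem:invertee_identee}, we obtain the factorization $\Pi=P\cdot Q$ with $(P,P_0,(\ )_0)$ a fibrewise opfibration with groupoidal fibres.

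The main obstacle I anticipate is not conceptual but bibliographic bookkeeping: making sure that \emph{every} lemma invoked in the group-theoretic proof (existence and pullback-stability of the relevant class of weak equivalences, the $2$-out-of-$3$ property for weak equivalences, the behaviour of $\mathcal B$ on composites up to $2$-cell, and the section lemma \cite[Remark~5.7]{AMMV13}) is genuinely available in a semi-abelian \cC\ satisfying \SH, and citing the right source for each. One subtle point worth double-checking is that the category $\XExt(\cC)$ of fractions is locally small — in the group case this is implicit, and in general it is exactly the hypothesis of Proposition~\ref{prop:factorization} that must be verified, which it is because the hom-sets of $[\BExt](\cC)$ are sets of isomorphism classes of butterflies, hence small. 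Beyond that, since the argument of Proposition~\ref{prop:fractions} was explicitly flagged as ``purely formal'', no new calculation is needed.
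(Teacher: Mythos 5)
Your proposal is correct and follows essentially the same route as the paper: the authors' entire argument is that the proof of Proposition \ref{prop:fractions} is purely formal and therefore transcribes to any semi-abelian category satisfying \SH\ (with the ingredients supplied by \cite{CM16}, \cite{AMMV13} and \cite{CMMV_Yoneda}), after which Proposition \ref{prop:factorization} yields the factorization with groupoidal fibres. Your additional remarks on bibliographic bookkeeping and local smallness only make explicit what the paper leaves implicit.
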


As a consequence of Theorem \ref{thm:structure_semiab}, we obtain an intrinsic version of Theorem \ref{thm:XExt} (where \Gp\ is replaced by any semi-abelian category \cC\ with \SH). The same holds for Theorem \ref{thm:structure_Gp}, provided one considers the internal cohomology developed by Bourn and, in terms of crossed extensions, by Rodelo (see \cite{Rodelo}). Finally, note that a generalization of Theorem \ref{thm:SML} to action representable categories already appears in \cite{Bourn08}.

\section{Applications - Part III}

We end this work with some further applications of Theorem \ref{thm:structure} to categorical groups and to unital associative algebras. 

\subsection{Categorical groups} \label{sec:catgp}

It is well known that the category $\XMod(\Gp)$ of internal crossed modules in groups is equivalent to the category $\Gpd(\Gp)$ of internal groupoids in groups.
Groupoids in groups can be equivalently presented as strict categorical groups, i.e.\ strict monoidal groupoids where each object is invertible. In such terms, internal functors become strict monoidal functors. Actually strict categorical groups and strict monoidal functors organize in a 2-category, where 2-cells are monoidal natural transformations. This 2-category is equivalent to $\underline\XMod(\Gp)$, hence its bicategory of fractions $\underline{\Mon\Gp}$ is biequivalent to $\underline\Bfly(\Gp)$. In \cite{Vitale10}, it is proved that (not necessarily strict) monoidal functors serve as morphisms between strict categorical groups in $\underline{\Mon\Gp}$ and in \cite{AMMV13} it is described in detail how to perform the correspondence with butterflies. Actually, $\underline{\Mon\Gp}$ is biequivalent to the 2-category $\underline{\Cat\Gp}$ of (not necessarily strict) categorical groups, monoidal functors and monoidal natural transformations. Hence $[\Cat\Gp]$ is equivalent to $[\Bfly(\Gp)]$, hence to $[\BExt(\Gp)]$. 

Thanks to the previous equivalences, we can adapt Theorem \ref{thm:XExt} to the present context in order to formulate an obstruction and classification theorem for monoidal functors between categorical groups. With each categorical group $\mathbb G$, one can functorially associate a module $\Pi(\mathbb G)=(\Pi_0(\mathbb G),\Pi_1(\mathbb G),\xi_\bG)$, where $\Pi_0(\bG)$ and $\Pi_1(\bG)$ are the homotopy invariants of the categorical group, respectively given by the group of connected components of $\bG$ and the group of automorphisms of its identity object. Moreover, we can choose an equivalence $\Phi$ between $[\Cat\Gp]$ and $[\BExt(\Gp)]$ in such a way that $P\cdot \Phi = \Pi$, where $P \colon [\BExt(\Gp)] \to \Mod(\Gp)$ is as in Theorem \ref{cor:Butter_fac}. Via $\Phi$, one can also associate with each categorical group $\bG$ an element $[\epsilon_\bG]$ of the cohomology group $\mathcal H^3(\Pi_0(\mathbb G),\Pi_1(\mathbb G),\xi_\bG)$.

Then we are ready to recover, in the following theorem, a result on the classification of monoidal functors between categorical groups stated by Cegarra, Garc\'ia-Calcines and Ortega in \cite{CGCO} and based on the homotopical classification of categorical groups established by Sinh in \cite{Sinh}.

\begin{Theorem} \label{thm:monoidal_functors_CG}
Given two categorical groups $\mathbb H$ and $\mathbb G$, and a homomorphism of modules $\uffa=(\varphi_0,\varphi)\colon \Pi(\mathbb H)\to \Pi(\mathbb G)$, then
\begin{itemize}
\item[i)] there exists a monoidal functor $$F\colon\mathbb H \to \mathbb G$$ with $\Pi F=\uffa$ if and only if $[\varphi\cdot \epsilon_\bH]=[\epsilon_\bG\cdot(\varphi_0\times\varphi_0\times \varphi_0)]$ in the group $\mathcal H^3(\Pi_0(\mathbb H),\Pi_1(\mathbb G),\varphi_0^*\xi_\bG)$;
\item[ii)] if this is the case, isomorphism classes of such functors form a simply transitive $\mathcal H^2(\Pi_0(\mathbb H),\Pi_1(\mathbb G),\varphi_0^*\xi_\bG)$-set.
\end{itemize}
\end{Theorem}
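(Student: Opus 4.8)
The plan is to deduce this statement from Theorem \ref{thm:XExt} (equivalently, Theorem \ref{thm:structure_Gp}) by transporting everything along the equivalence $\Phi\colon[\Cat\Gp]\to[\BExt(\Gp)]$, which has been chosen so that $P\cdot\Phi=\Pi$. The key observation is that $[\Cat\Gp]$ has the categorical groups as objects and isomorphism classes of monoidal functors as morphisms, so that for fixed $\mathbb H$, $\mathbb G$ and $\uffa$ the set of isomorphism classes of monoidal functors $F\colon\mathbb H\to\mathbb G$ with $\Pi F=\uffa$ is precisely the fibre over $\uffa$ of the map $\Pi\colon[\Cat\Gp](\mathbb H,\mathbb G)\to\Mod(\Gp)(\Pi\mathbb H,\Pi\mathbb G)$.

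Since $\Phi$ is an equivalence of categories with $P\cdot\Phi=\Pi$, this fibre is in natural bijection with the fibre over $\uffa$ of $P\colon[\BExt(\Gp)](X,X')\to\Mod(\Gp)(\Pi X,\Pi X')$, where $X=\Phi(\mathbb H)$ and $X'=\Phi(\mathbb G)$, that is, with the set $[\BExt(\Gp)]_{\uffa}(X,X')$ appearing in Theorem \ref{thm:XExt}. Applying that theorem to the fibrewise opfibration $(P,P_0,(\ )_0)$ with groupoidal fibres (Theorem \ref{cor:Butter_fac}) and to the triple $(X,X',\uffa)$, one gets that this set is non-empty if and only if $\uffa^*X'\cong\uffa_*X$ in $[\BExt(\Gp)]$, and that when non-empty it is a simply transitive $\Gamma$-set with $\Gamma=[\BExt(\Gp)]_{\Pi(\uffa^*X')}(\uffa^*X',\uffa^*X')$.

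It then remains to rephrase the two conclusions cohomologically, exactly as in the passage from Theorem \ref{thm:XExt} to Theorem \ref{thm:structure_Gp}. By the very definition of $[\epsilon_\bG]$ via $\Phi$, the crossed extension $X'=\Phi(\mathbb G)$ represents $[\epsilon_\bG]$ in $\mathcal H^3(\Pi_0(\mathbb G),\Pi_1(\mathbb G),\xi_\bG)$, and likewise $X$ represents $[\epsilon_\bH]$; hence $\uffa_*X$ represents $[\varphi\cdot\epsilon_\bH]$ and $\uffa^*X'$ represents $[\epsilon_\bG\cdot(\varphi_0\times\varphi_0\times\varphi_0)]$, both in $\mathcal H^3(\Pi_0(\mathbb H),\Pi_1(\mathbb G),\varphi_0^*\xi_\bG)$. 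So $\uffa^*X'\cong\uffa_*X$ is equivalent to $[\varphi\cdot\epsilon_\bH]=[\epsilon_\bG\cdot(\varphi_0\times\varphi_0\times\varphi_0)]$, which is (i). For (ii) one uses, as in Remark \ref{rem:cat_group} and in the group case, that the groupoidal $P$-fibres over $\Mod(\Gp)$ carry a categorical-group structure (Baer sum) for which the automorphism group of any object is canonically isomorphic to that of the identity object --- here the $0$ crossed extension over the $\varphi_0$-pulled-back module --- whose automorphism group is $\mathcal H^2(\Pi_0(\mathbb H),\Pi_1(\mathbb G),\varphi_0^*\xi_\bG)$; hence $\Gamma\cong\mathcal H^2(\Pi_0(\mathbb H),\Pi_1(\mathbb G),\varphi_0^*\xi_\bG)$.

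I do not expect a genuine obstacle here: the compatibility $P\cdot\Phi=\Pi$ and the construction of $[\epsilon_\bG]$ are already in place, so the argument is essentially a change of notation along $\Phi$. The only point needing a little care is the identification of $\Gamma$ with the second cohomology group, i.e.\ checking that all automorphism groups in a $P$-fibre are isomorphic and that the one of the identity object computes $\mathcal H^2$; but this is precisely the ingredient already invoked in Theorem \ref{thm:structure_Gp}, so nothing new must be proved.
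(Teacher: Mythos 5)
Your proposal is correct and follows essentially the same route as the paper: the theorem is obtained by transporting Theorem \ref{thm:XExt} (in its cohomological form, Theorem \ref{thm:structure_Gp}) along the chosen equivalence $\Phi\colon[\Cat\Gp]\to[\BExt(\Gp)]$ satisfying $P\cdot\Phi=\Pi$, with the identification of $\Gamma$ with $\mathcal H^2$ handled exactly as in Remark \ref{rem:cat_group} and the discussion following Theorem \ref{thm:XExt}. No discrepancy to report.
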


\subsection{Crossed extensions of unital associative algebras}

In this section, we start by recalling the notion of crossed biextension of unital associative algebras over a fixed field $\bK$. Crossed biextensions were introduced in \cite{BauMi02} to give a description of the Hochschild cohomology group $\mathcal H^3_{_H}(C,B)$ for any given $C$-bimodule $B$.

A \emph{crossed bimodule} is a morphism of $A_1$-bimodules
$$
\partial\colon A_2\to A_1\,,
$$
(where $A_1$ is considered as a $A_1$-bimodule via the multiplication in $A_1$) satisfying  $$\partial(a)*a'=a*\partial(a')\,,$$ for all $a,a'\in A_2$.
Notice that the product defined by $aa'=\partial(a)*a'$ gives $A_2$ a structure of (not necessarily unital) associative algebra.

A \emph{crossed biextension} is an exact sequence
\[
X\colon\qquad
\xymatrix{
0\ar[r]
& B\ar[r]^j
& A_2\ar[r]^{\partial}
& A_1\ar[r]^p
& C\ar[r]
& 0}
\]
in $\bK$-\Vect, where $p$ is a surjective morphism of unital associative algebras and $\partial$ is a crossed bimodule. Such data determine a $C$-bimodule structure on $B$, denoted by $\Pi(X)$. Morphisms of crossed biextensions are obviously defined and they form a category $\XBiext$, with $\Pi$ becoming a functor.

In \cite{CMMV_Yoneda} it is proved that the commutative triangle
\[
\xymatrix{\XBiext\ar[rr]^\Pi\ar[dr]_{\Pi_0}&&\Bimod\ar[dl]^{(\ )_0}\\&\AssAlg_1}
\]
is a fibrewise opfibration in $\Fib(\AssAlg_1)$. In fact, the fibres of $\Pi$ are not groupoidal, so that, in order to apply Theorem \ref{thm:structure}, we need to move to the category of fractions of $\XBiext$ with respect to $\Pi$-vertical arrows. We cannot follow directly the lines of the group theoretical case (see Section \ref{sec:bfly_gp}), since $\AssAlg_1$ is not semi-abelian. However, the category $\XBiext$ is equivalent to the category $\Gpd(\AssAlg_1)$ of internal groupoids in $\AssAlg_1$ (see \cite{Ellis}). Actually, the latter is naturally endowed with a 2-category structure, having internal natural transformations as 2-cells, and the equivalence above becomes a 2-equivalence $\underline{\Gpd}(\AssAlg_1)\simeq\underline{\XBiext}$. Under this 2-equivalence, $\Pi$-vertical arrows correspond to fully faithful and essentially surjective internal functors, also called weak equivalences in \cite{MMV13}. It is proved in \cite{MMV13} that the bicategory $\underline\Frac(\cE)$ of fractions of internal groupoids in a Barr-exact category \cE\ with respect to weak equivalences has \emph{fractors} as 1-cells. The latter are a special kind of internal profunctors (see \cite{Bourn10,PTJ}) whose span representation has a fully faithful, surjective on objects, left leg. The interested reader may look at \cite{MMV13} for a more detailed account.

Thanks to the 2-equivalence between $\underline\Gpd(\AssAlg_1)$ and $\underline\XBiext$, we can describe the bicategory of fractions $\underline\BXBiext$ of crossed biextensions with respect to weak equivalences, by translating fractors into \emph{crossed bimodule butterflies} (introduced in \cite{Aldrovandi}).

\begin{Definition}
A crossed bimodule butterfly $\widehat{E}$ between two crossed biextensions $X$ and $X'$ is a diagram
\[
\xymatrix@!=2ex{
  & B \ar[d] & & B' \ar[d] \\
	& A_2 \ar[rd]^{\kappa} \ar[dd]_{\partial} & & A_2' \ar[dd]^{\partial'} \ar[ld]_{\iota} \\
  \widehat{E}\colon & & E \ar[ld]^{\delta} \ar[rd]_{\gamma} \\
	& A_1 \ar[d] & & A_1' \ar[d] \\
	& C & & C'
}
\]
such that
\begin{enumerate}
 \item[i)] $\delta$ is a surjective morphism in $\AssAlg_1$ and $\iota$ is its kernel in $k$-\Vect;
 \item[ii)] $\gamma$ is a morphism in $\AssAlg_1$, $\kappa$ a morphism in $k$-\Vect\ and $\gamma\kappa=0$;
 \item[iii)] for all $a$ in $A_2$, $a'$ in $A_2'$ and $e$ in $E$, the following conditions hold:
 \[
 \begin{array}{l}
 \iota(a'*\gamma(e))=\iota(a')e \\
 \iota(\gamma(e)*a')=e\iota(a') \\
 \kappa(a*\delta(e))=\kappa(a)e \\
 \kappa(\delta(e)*a)=e\kappa(a)
 \end{array}
 \]
\end{enumerate}
A $2$-cell $\alpha\colon \widehat{E}\Rightarrow \widehat{E}' \colon X \to X'$ is a morphism $\alpha \colon E \to E'$ in $\AssAlg_1$, commuting with the $\kappa$'s, the $\iota$'s, the $\delta$'s and the $\gamma$'s.
\end{Definition}

Notice that such an $\widehat{E}$ is, in fact, a special internal butterfly in the semi-abelian category $\AssAlg$ of (not necessarily unital) associative algebras. It is then  not surprising that also crossed bimodule butterflies admit a span representation like \eqref{diag:span_biext} as explained in \cite{Aldrovandi}. We can then repeat the argument of Proposition \ref{prop:fractions} to prove that the classifying category $[\BXBiext]$ is the category of fractions of $\XBiext$ with respect to weak equivalences.

Now, thanks to Proposition \ref{prop:factorization}, we can get the following factorization:
$$
\xymatrix@C=6ex{
\XBiext \ar[dr]_{{\Pi}_0}\ar[r]_{Q}\ar@/^3ex/[rr]^{\Pi}
&[\BXBiext]\ar[d]_{P_0}\ar[r]_P
&\Bimod\ar[dl]^{(\ )_0}
\\
&\AssAlg_1
}
$$
with $P$ a fibrewise opfibration with groupoidal fibres. Recall from \cite{BauMi02} that one can associate with each crossed biextension
\[
X\colon\qquad
\xymatrix{
0\ar[r]
& B\ar[r]^j
& A_2\ar[r]^{\partial}
& A_1\ar[r]^p
& C\ar[r]
& 0}
\]
a cocycle $\epsilon\colon C\otimes C\otimes C \to B$, hence obtaining an equivalent description of $\mathcal H_H^3(C,B)$ in terms of connected components of the fibre of $\Pi$ over $(C,B)$. Then, applying Theorem \ref{thm:structure} to the triangle $(P,P_0,(\ )_0)$, we get the next result, whose proof follows the lines of the group theoretical case.

\begin{Theorem} \label{thm:structure_AssAlg}
Consider two crossed biextensions $X$ and $X'$, with associated elements $[\epsilon]$ in $\mathcal H^3_{_H}(C,B)$ and $[\epsilon']$ in $\mathcal H^3_{_H}(C',B')$ respectively, and a morphism $\uffa=(\varphi_0,\varphi)\colon (C,B)\to (C',B')$ of bimodules. Then
\begin{itemize}
 \item[$i)$] there exists a crossed bimodule butterfly $\widehat{E}\colon\xymatrix@!C=3ex{X \ar[r]|-@{|} & X'}$ with $P([\widehat{E}])=\uffa$ if and only if $[\varphi\cdot \epsilon]=[\epsilon'\cdot(\varphi_0\otimes\varphi_0\otimes \varphi_0)]$;
 \item[$ii)$] if $[\BXBiext]_{\uffa}(X,X')\neq \emptyset$, it is a simply transitive $\mathcal H^2_{_H}(C,B')$-set, where the $C$-bimodule structure on $B'$ is defined by pulling back its $C'$-bimodule structure.
\end{itemize}
\end{Theorem}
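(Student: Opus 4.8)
The argument parallels the group-theoretic Theorems~\ref{thm:XExt} and~\ref{thm:structure_Gp}; the only delicate point, namely that $\AssAlg_1$ is not semi-abelian, has already been handled by passing through internal groupoids and fractors to identify $[\BXBiext]$ with the category of fractions of $\XBiext$ with respect to weak equivalences. Given this, the plan is to apply Theorem~\ref{thm:structure} to the fibrewise opfibration $(P,P_0,(\ )_0)$ displayed just above, whose fibres are groupoids. Since $P_0$ is a fibration, I would first form the $P_0$-cartesian lifting of $\varphi_0$ at $X'$; concretely this is a pullback construction analogous to diagram~\eqref{diag:the_example}, yielding a crossed biextension $\uffa^*X'$ over the bimodule $(C,B')$, where the $C$-bimodule structure on $B'$ is obtained by pulling back its $C'$-bimodule structure along $\varphi_0$. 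Then, since the restriction of $P$ to the fibre over $C$ is an opfibration, I would form the opcartesian lifting of $\varphi$ at $X$ --- a push-forward --- yielding a crossed biextension $\uffa_*X$ over the same bimodule $(C,B')$. By Theorem~\ref{thm:structure}(i), a crossed bimodule butterfly $\widehat E\colon X\to X'$ with $P([\widehat E])=\uffa$ exists if and only if $\uffa^*X'\cong\uffa_*X$ in the $P$-fibre over $(C,B')$.

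The next step translates this isomorphism condition into Hochschild cohomology. The fibre of $\Pi$, hence of $P$, over $(C,B')$ is a groupoid whose set of connected components is identified with $\mathcal H^3_{_H}(C,B')$ through the cocycle assignment $X\mapsto\epsilon$ recalled above from \cite{BauMi02}; therefore two crossed biextensions over $(C,B')$ become isomorphic in that fibre precisely when they carry the same class in $\mathcal H^3_{_H}(C,B')$. It then remains to check --- a routine cocycle computation --- that this assignment is natural with respect to push-forward along $\varphi$ and to pullback along $\varphi_0$, so that the class of $\uffa_*X$ is $[\varphi\cdot\epsilon]$ and that of $\uffa^*X'$ is $[\epsilon'\cdot(\varphi_0\otimes\varphi_0\otimes\varphi_0)]$. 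Part~(i) follows.

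For part~(ii), Theorem~\ref{thm:structure}(ii) says that, when nonempty, $[\BXBiext]_{\uffa}(X,X')$ is a simply transitive torsor under the group $H=[\BXBiext]_{\Pi(\uffa^*X')}(\uffa^*X',\uffa^*X')$ of automorphisms of $\uffa^*X'$ in the $P$-fibre. To identify $H$ with $\mathcal H^2_{_H}(C,B')$ I would argue exactly as in Remark~\ref{rem:cat_group} and the group case: the Baer sum of crossed biextensions endows the $P$-fibre over $(C,B')$ with a symmetric monoidal structure making it a symmetric categorical group, so the automorphism group of any object is canonically isomorphic to that of the identity object, the $0$ crossed biextension
\[
\xymatrix{B' \ar[r]^{1} & B' \ar[r]^{0} & C \ar[r]^{1} & C}
\]
and this last group is $\pi_1$ of the categorical group, which the cocycle description identifies with $\mathcal H^2_{_H}(C,B')$.

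The step I expect to be the main obstacle is precisely this identification of $H$ with $\mathcal H^2_{_H}(C,B')$. One must verify that the Baer sum of crossed biextensions is a well-defined symmetric monoidal operation on each $P$-fibre and that every object is invertible up to isomorphism --- so that the fibre is genuinely a categorical group, in the spirit of the lower-dimensional statement for singular extensions preceding Theorem~\ref{thm:classification_aa} --- and then compute the automorphism group of the $0$ object, matching it through the cocycle description of \cite{BauMi02} with the group of Hochschild $2$-cocycles modulo coboundaries. All the rest reduces, via the identification of $[\BXBiext]$ with the category of fractions of $\XBiext$ (the analogue of Proposition~\ref{prop:fractions}) together with Proposition~\ref{prop:factorization}, to a purely formal application of Theorem~\ref{thm:structure}.
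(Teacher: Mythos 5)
Your proposal is correct and follows essentially the same route as the paper: the paper's proof consists precisely of applying Theorem~\ref{thm:structure} to the localized fibrewise opfibration $(P,P_0,(\ )_0)$, translating the condition $\uffa^*X'\cong\uffa_*X$ through the Baues--Minian cocycle description of $\mathcal H^3_{_H}$, and identifying the acting automorphism group with $\mathcal H^2_{_H}(C,B')$ ``along the lines of the group theoretical case'' (i.e.\ via the Baer-sum categorical group structure on the fibre and the $\pi_1$ argument of Remark~\ref{rem:cat_group}). You have also correctly located the only steps the paper leaves implicit, namely the naturality of the cocycle assignment under pullback and push-forward and the categorical group structure on the $P$-fibres.
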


As a final result we are going to obtain a variation of Schreier-Mac Lane Theorem \ref{thm:SML} for the case of unital associative algebras. Let us first observe that a straightforward translation to this context is not possible. Indeed, actions are not \emph{representable} in $\AssAlg_1$, i.e.\ for a given algebra $A$, in general one cannot represent actions on $A$ via morphisms into a special algebra, as it happens for $\Aut(G)$ in the case of a group $G$ (see \cite{BJK} for a detailed account on representability of actions). Nevertheless, one can rely on the intrinsic Schreier-Mac Lane theory developed in \cite{BM} and \cite{CMM13} for action accessible categories.

For each surjective $f$ in $\AssAlg_1$ and each short exact sequence
\[
\xymatrix{
0 \ar[r] & K \ar[r]^k & E \ar[r]^f & C \ar[r] & 0
}
\]
of (not necessarily unital) associative algebras, one can construct a diagram similar to \eqref{diag:olo}, where $\mathcal I_K$ is replaced by a \emph{faithful} crossed biextension and $\psi_0$ is a regular epimorphism. A crossed bimodule $\partial\colon A_2 \to A_1$ is said to be faithful if $A_1$ acts faithfully on $A_2$, i.e.\ $a$ in $A_1$ is such that $a\ast a'=a'\ast a=0$ for each $a'$ in $A_2$ if and only if $a=0$. Notice that a faithful crossed bimodule has the centre (or annihilator) $\text{Z}(A_2)$ of $A_2$ as kernel.

It is easy to see that the canonical faithful crossed extension associated with $(f,k)$ can be constructed via the quotient $q$ of $E$ over the centralizer (or annihilator) $\text{Z}(K,E)$ of $K$ in $E$. We get then the following commutative diagram
\[
\xymatrix@!C=6ex{
	0 \ar[d] \ar[rr] & &  \text{Z}(K) \ar[d] \\
	0 \ar[rd] \ar[dd] & & K \ar[dd]^{\partial} \ar[ld]_{k} \\
	& E \ar[ld]_{f} \ar[rd]^q \\
	C \ar[d]_1 & & E/\text{Z}(K,E) \ar[d] \\
	C \ar@{->>}[rr]_-{\psi_0} & & Q
}
\]
which gives rise to a crossed bimodule butterfly $\widehat{E}$ with faithful codomain, and with $\psi_0=P_0([\widehat{E}])$ a regular epimorphism. As a consequence, the action $\xi$ of $Q$ on $\mathrm{Z}(K)$ induces an action $\psi_0^*\xi$ of $C$ on $\mathrm{Z}(K)$. Notice that, given an isomorphism of crossed biextension as
\begin{equation} \label{diag:ak}
\xymatrix{
\text{Z}(K) \ar@{=}[d] \ar[r]^-j & K \ar@{=}[d] \ar[r]^-{\partial} & E/\text{Z}(K,E) \ar[d]^\sim \ar[r]^-p & Q \ar[d]^\sim_\tau \\
\text{Z}(K) \ar[r]^-j & K \ar[r]^{\partial'} & A' \ar[r]^{p'} & Q'
}
\end{equation}
the induced action $(\tau\psi_0)^*\xi'$ of $C$ on $\mathrm{Z}(K)$ coincides with $\psi_0^*\xi$.

Therefore, the extension $(f,k)$ we started with gives rise to what we call an \emph{abstract kernel}. Namely, an abstract kernel $\Psi$ is a class of isomorphism of kind \eqref{diag:ak} of diagrams of the form
\[
\xymatrix{
 & &  & C \ar@{->>}[d]^{\psi_0} \\
\text{Z}(K) \ar[r]^-j & K \ar[r]^{\partial} & A \ar[r]^{p} & Q
}
\]
with $\partial$ a faithful crossed module and $\psi_0$ regular epimorphism.

Now, fixed a representative of such an abstract kernel $\Psi$, as the one in the diagram above, a butterfly $\widehat{E}$ between the crossed biextensions $(1_C,0,0)$ and $(p,\partial,j)$ with $P([\widehat{E}])=(\psi_0,0)$ determines an extension $(f,k)$ whose associated abstract kernel is exactly $\Psi$. We can now apply Theorem \ref{thm:structure_AssAlg} in order to get the following result.

\begin{Theorem}[Schreier-Mac Lane Classification Theorem for unital associative algebras] \label{thm:SML_AA}
Given an abstract kernel $\Psi$ represented by $((p,\partial,j),\psi_0)$, let $\xi$ denote the action of $Q$ on $\mathrm{Z}(K)$ induced by the crossed module structure of $\partial$, $[\omega]$ the corresponding element of $\mathcal{H}^3_{_H}(Q,\mathrm{Z}(K),\xi)$, and $\mathrm{Ext}(C,K,\Psi)$ denote the set of isomorphism classes of extensions inducing the abstract kernel $\Psi$. Then
\begin{itemize}
 \item[$i)$] $\mathrm{Ext}(C,K,\Psi)\neq\emptyset$ if and only if $[\omega\cdot(\psi_0\otimes\psi_0\otimes \psi_0)]=0$;
 \item[$ii)$] if $\mathrm{Ext}(C,K,\Psi)\neq\emptyset$, it is a simply transitive $\mathcal H^2_{_H}(C,\mathrm{Z}(K),\psi_0^*\xi)$-set.
\end{itemize}
\end{Theorem}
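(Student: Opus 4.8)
The plan is to deduce the statement from Theorem \ref{thm:structure_AssAlg} by exhibiting $\mathrm{Ext}(C,K,\Psi)$ as a hom-set in $[\BXBiext]$ between two explicit crossed biextensions, and then reading both assertions off that theorem. Throughout, I fix the chosen representative $((p,\partial,j),\psi_0)$ of $\Psi$ and set $X=(1_C,0,0)$, the crossed biextension $0\to 0\to 0\to C\xrightarrow{1_C}C\to 0$, and $X'=(p,\partial,j)$, so that $\Pi(X)=(C,0)$ is the trivial bimodule while $\Pi(X')=(Q,\mathrm{Z}(K),\xi)$.

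First I would make the correspondence announced just before the statement precise, as a bijection
$$
\mathrm{Ext}(C,K,\Psi)\;\cong\;[\BXBiext]_{(\psi_0,0)}(X,X')\,.
$$
From an extension $(f,k)$ of $C$ by $K$ inducing $\Psi$ one builds, as in the diagram preceding the statement, its canonical faithful crossed biextension through the quotient $q\colon E\to E/\mathrm{Z}(K,E)$; reading off waist and legs gives a crossed bimodule butterfly $\widehat E\colon X\to X'$, and since the kernel of $X$ is $0$ while $P_0([\widehat E])=\psi_0$, one has $P([\widehat E])=(\psi_0,0)$. Conversely, from a crossed bimodule butterfly $\widehat E\colon X\to X'$ with $P([\widehat E])=(\psi_0,0)$ one extracts the waist $E$, which---because $\iota=\ker\delta$ with $\delta\colon E\to C$ surjective and $\iota$ defined on $K$---fits into a short exact sequence $0\to K\to E\to C\to 0$; recomputing its canonical faithful crossed biextension returns $X'$ up to the isomorphisms permitted in \eqref{diag:ak}, so its abstract kernel is $\Psi$. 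I would then check that these two assignments take isomorphic data to isomorphic data (an isomorphism of extensions inducing a $2$-cell of the associated butterflies, and conversely) and are mutually inverse on isomorphism classes; the point that ``abstract kernel'' is itself an isomorphism class of diagrams \eqref{diag:ak} is absorbed by the identity $(\tau\psi_0)^*\xi'=\psi_0^*\xi$ already noted, which shows independence of the chosen representative.

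Granting this bijection, I would apply Theorem \ref{thm:structure_AssAlg} to $X$ and $X'$ with the morphism of bimodules $\uffa=(\psi_0,0)\colon(C,0)\to(Q,\mathrm{Z}(K))$. The element attached to $X$ is $[\epsilon]=0\in\mathcal H^3_{_H}(C,0)$, and the element attached to $X'$ is $[\epsilon']=[\omega]\in\mathcal H^3_{_H}(Q,\mathrm{Z}(K),\xi)$. Part $i)$ of that theorem then produces a butterfly with $P([\widehat E])=\uffa$ if and only if $[\,0\cdot 0\,]=[\omega\cdot(\psi_0\otimes\psi_0\otimes\psi_0)]$, i.e.\ if and only if $[\omega\cdot(\psi_0\otimes\psi_0\otimes\psi_0)]=0$; via the bijection this is $i)$. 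Part $ii)$ states that, when non-empty, that hom-set is a simply transitive $\mathcal H^2_{_H}(C,\mathrm{Z}(K),\psi_0^*\xi)$-set---the $C$-bimodule structure being $\xi$ pulled back along $\psi_0$---and transporting this torsor structure along the bijection yields $ii)$.

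The step I expect to be the main obstacle is the bijection of the second paragraph, and in particular its well-definedness at the level of isomorphism classes: one must verify that forming the canonical faithful crossed biextension of an extension and then its butterfly, and conversely extracting an extension from a butterfly, are genuinely inverse operations, compatible with all the isomorphisms involved---including the replacement of the representative of the abstract kernel by an isomorphic one. Once this dictionary is in place, both parts of the theorem follow at once from Theorem \ref{thm:structure_AssAlg}.
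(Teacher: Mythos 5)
Your proposal matches the paper's own argument: the paper likewise takes $X=(1_C,0,0)$ and $X'=(p,\partial,j)$, identifies $\mathrm{Ext}(C,K,\Psi)$ with the (isomorphism classes of) crossed bimodule butterflies $X\to X'$ lying over $(\psi_0,0)$ via the canonical faithful crossed biextension construction, and then reads off both parts from Theorem \ref{thm:structure_AssAlg} using $[\epsilon]=0$ for $X$ and $[\epsilon']=[\omega]$ for $X'$. The well-definedness checks you single out as the main obstacle are precisely what the paper's preceding discussion (in particular the invariance $(\tau\psi_0)^*\xi'=\psi_0^*\xi$ under the isomorphisms of diagram \eqref{diag:ak}) is intended to supply, so there is no gap.
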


\begin{Remark}
Based on the previous discussion, one can see that the set of all extensions can be obtained as the disjoint union of the sets $\mathrm{Ext}(C,K,\Psi)$ for each $\Psi$, hence providing a classification of all possible extensions of unital associative algebras with non-abelian kernel. 
\end{Remark}

\section*{Acknowledgments}

Partial financial support was received from INDAM--Istituto Nazionale di Alta Matematica ``Francesco Severi''--Gruppo Nazionale per le Strutture Algebriche, Geometriche e le loro Applicazioni.

\end{document}